\def\disp{\displaystyle}
\def\tto{\;{\lower 1pt \hbox{$\rightarrow$}}\kern -10pt
\hbox{\raise 2pt \hbox{$\rightarrow$}}\;}
\def\hat{\widehat}
\def\tilde{\widetilde}
\def\Bar{\overline}
\def\ra{\rangle}
\def\la{\langle}
\def\R{\mathbb{R}}
\def\oR{\overline\mathbb{R}}
\def\st{\stackrel}
\def\oR{\Bar{\R}}
\newtheorem{definition}{Definition}
\newtheorem{theorem}{Theorem}
\newtheorem{remark}{Remark}
\newtheorem{assumption}{Assumption}
\newtheorem{example}{Example}
\def\A{{\mathcal{A}}}
\def\D{{\mathcal{D}}}
\def\I{{\mathcal{I}}}
\def\M{{\mathcal{M}}}
\def\Q{{\mathcal{Q}}}
\def\T{{\mathcal{T}}}
\def\H{{\mathcal{H}}}
\def\cR{{\mathcal{R}}}
\def\R{{\rm I\!R}}
\def\Argmin{\mathop{\rm Arg\,min}}
\title{\sf Global convergence of splitting methods for nonconvex composite optimization}
\author{
Guoyin Li \thanks{Department of Applied
Mathematics, University of New South Wales, Sydney 2052, Australia.
E-mail: {g.li@unsw.edu.au}. This author was partially supported by a research grant from Australian Research Council.}
\and Ting Kei Pong \thanks{Department of Applied Mathematics, the Hong Kong Polytechnic University, Hong Kong. This author was also supported as a PIMS Postdoctoral Fellow at Department of Computer Science,
University of British Columbia, Vancouver, Canada, during the early stage of preparation of this manuscript. E-mail: {tk.pong@polyu.edu.hk}.}
}
\date{Revised Version: September 29, 2015}
\begin{document}
  \maketitle

\begin{abstract}
  We consider the problem of minimizing the sum of a smooth function $h$ with a bounded Hessian, and a nonsmooth function.
   We assume that the latter function is a composition of a proper closed function $P$ and a surjective linear map $\M$, with the proximal mappings of $\tau P$, $\tau > 0$, simple to compute. This problem is nonconvex in general and encompasses many important applications in engineering and machine learning. In this
   paper, we examined two types of splitting methods for solving this nonconvex optimization problem: alternating direction method of multipliers and proximal gradient algorithm. For the direct adaptation of the alternating direction method of multipliers, we show that, if the penalty parameter is chosen sufficiently large and the sequence generated has a cluster point, then it gives a stationary point of the nonconvex problem. We also establish convergence of the whole sequence under an additional assumption that the functions $h$ and $P$ are semi-algebraic. Furthermore, we give simple sufficient conditions to guarantee boundedness of the sequence generated. These conditions can be satisfied for a wide range of applications including the least squares problem with the $\ell_{1/2}$ regularization. Finally, when $\M$ is the identity so that the proximal gradient algorithm can be efficiently applied, we show that any cluster point is stationary under a slightly more flexible constant step-size rule than what is known in the literature for a nonconvex $h$.
\end{abstract}

\section{Introduction}

In this paper, we consider the following optimization problem:
\begin{equation}\label{P1}
  \begin{array}{rl}
    \min\limits_x & h(x) + P(\M x),
  \end{array}
\end{equation}
where $\M$ is a linear map from $\R^n$ to $\R^m$, $P$ is a proper closed function on $\R^m$ and $h$ is twice continuously differentiable on $\R^n$ with a bounded Hessian. 
We also assume that the proximal (set-valued) mappings
\[
u\mapsto \Argmin_y\left\{\tau P(y) + \frac12\|y - u\|^2\right\}
\]
are well-defined and are simple to compute for all $u$ and for any $\tau > 0$. Here, $\Argmin$ denotes the set of minimizers, and the simplicity is understood in the sense that {\em at least one} element of the set of minimizers can be obtained efficiently. Concrete examples of such $P$ that arise in applications include functions listed in \cite[Table~1]{GHLYZ13}, the $\ell_{1/2}$ regularization \cite{ZengLinWangXu14},
the $\ell_0$ regularization, and the
indicator functions of the set of vectors with cardinality at most $s$ \cite{BluDav08}, matrices with rank at most $r$ and $s$-sparse vectors in simplex \cite{KBCK13}, etc. Moreover, for a large class of nonconvex functions, a general algorithm has been proposed recently in \cite{HareSaga09} for computing the proximal mapping.

The model problem \eqref{P1} with $h$ and $P$ satisfying the above assumptions encompasses many important applications in engineering and machine learning; see, for example, \cite{BluDav08,GHLYZ13,LiV09,CaT05,CaRe09}. In particular, many sparse learning problems are in the form of \eqref{P1} with $h$ being a loss function, $\M$ being the identity map and $P$ being a regularizer; see, for example, \cite{BluDav08} for the use of the $\ell_0$ norm as a regularizer, \cite{CaT05} for the use of the $\ell_1$ norm, \cite{CaRe09} for the use of the nuclear norm, and \cite{GHLYZ13} and the references therein for the use of various continuous difference-of-convex functions with simple proximal mappings. For the case when $\M$ is not the identity map, an application in stochastic realization where $h$ is a least squares loss function, $P$ is the rank function and $\M$ is the linear map that takes the variable $x$ into a block Hankel matrix was discussed in \cite[Section~II]{LiV09}.

When $\M$ is the identity map, the proximal gradient algorithm \cite{FukuMine81,Gabay83,Tse91} (also known as forward-backward splitting algorithm) can be applied whose subproblem involves a computation of the proximal mapping of $\tau P$ for some $\tau > 0$. It is known that when $h$ and $P$ are convex, the sequence generated from this algorithm is convergent to a globally optimal solution if the step-size is chosen from $(0,\frac2L)$, where $L$ is any number larger than the Lipschitz continuity modulus of $\nabla h$. For nonconvex $h$ and $P$, the step-size can be chosen from $(0,\frac1L)$ so that any cluster point of the sequence generated is stationary \cite[Proposition~2.3]{BreLo09} (see Section~\ref{sec2} for the definition of stationary points), and convergence of the whole sequence is guaranteed if the sequence generated is bounded and $h + P$ satisfies the Kurdyka-{\L}ojasiewicz (KL) property \cite[Theorem~5.1, Remark~5.2(a)]{AtBoSv13}. On the other hand, when $\M$ is a general linear map so that the computation of the
proximal mapping of $\tau P\circ \M$, $\tau > 0$, is not necessarily simple, the proximal gradient algorithm cannot be applied efficiently. In the case when $h$ and $P$ are both convex, one feasible approach is to apply the alternating direction method of multipliers (ADMM) \cite{EckB92,FGlowinski83,GaM76}. This has been widely used recently; see, for example \cite{WangYang08,WenGY10,YaZ09,CYY11,CHY12}. While it is tempting to directly apply the ADMM to the nonconvex problem \eqref{P1}, convergence has only been shown under specific assumptions. In particular, in \cite{WenPengLiuBaiSun13}, the authors studied an application that can be modeled as \eqref{P1} with $h=0$, $P$ being some risk measures and $\M$ typically being an injective linear map coming from data. They showed that any cluster point gives a stationary point, assuming square summability of the successive changes in the dual iterates. More recently, in \cite{AmesHong14}, the authors considered the case when $h$ is a nonconvex quadratic and $P$
is the sum of the $\
\ell_1$ norm and the indicator function of the Euclidean norm ball. They showed that if the penalty parameter is chosen sufficiently large (with an explicit lower bound) and the dual iterates satisfy a particular assumption, then any cluster point gives a stationary point. In particular, their assumption is satisfied if $\M$ is surjective.

Motivated by the findings in \cite{AmesHong14}, in this paper, we focus on the case when $\M$ is surjective and consider both the ADMM (for a general surjective $\M$) and the proximal gradient algorithm (for $\M$ being the identity). The contributions
of this paper are as follows:
\begin{itemize}
 \item First, we characterize cluster points of the sequence generated from the ADMM. In particular, we show that if the (fixed) penalty parameter in the ADMM is chosen sufficiently large (with a computable lower bound), and a cluster point of the sequence generated exists, then it gives a stationary point of problem \eqref{P1}.

 Moreover, our analysis allows replacing $h$ in the ADMM subproblems by its local quadratic approximations so that in each iteration of this variant, the subproblems only involve computing the proximal mapping of $\tau P$ for some $\tau > 0$ and
solving an unconstrained convex quadratic minimization problem. Furthermore, we also give simple sufficient conditions to guarantee the boundedness of the sequence generated. These conditions are satisfied in a wide range of applications; see Examples~\ref{examplenew:3}, \ref{examplenew:1} and \ref{examplenew:2}.

\item Second, under the additional assumption that $h$ and $P$ are semi-algebraic functions, we show that if a cluster point of the sequence generated from the ADMM exists, it is actually convergent.
Our assumption on semi-algebraicity not only can be easily verified or recognized, but also covers a broad class of optimization problems such as problems involving quadratic functions, polyhedral norms and the cardinality function.

\item Third, we give a concrete 2-dimensional  counterexample in Example~\ref{ex7:nonconverge} showing that the ADMM can be divergent when $\M$ is assumed to be injective (instead of surjective).

\item Finally, for the particular case when $\M$ equals the identity map, we show that the proximal gradient algorithm can be applied with a slightly more flexible step-size rule when $h$ is nonconvex (see Theorem~\ref{prop:prox} for the precise statement).
\end{itemize}

The rest of the paper is organized as follows. We discuss notation and preliminary materials in the next section. Convergence of the ADMM is analyzed in Section~\ref{sec:Msur}, and Section~\ref{sec:MI} is devoted to the analysis of the proximal gradient algorithm. Some numerical results are presented in Section~\ref{sec:num} to illustrate the algorithms. We give concluding remarks and discuss future research directions in Section~\ref{sec:con}.

\section{Notation and preliminaries}\label{sec2}

We denote the $n$-dimensional Euclidean space as $\R^n$, and use $\langle\cdot,\cdot\rangle$
to denote the inner product and $\|\cdot\|$ to denote the norm induced from the inner product.
Linear maps are denoted by scripted letters. The identity map is denoted by $\I$.
For a linear map $\M$, $\M^*$ denotes the adjoint linear map with respect
to the inner product and
$\|\M\|$ is the induced operator norm of $\M$. A linear self-map $\T$ is called symmetric if $\T = \T^*$.
For a symmetric linear self-map $\T$, we use $\|\cdot\|_\T^2$ to denote its induced quadratic form given by
$\|x\|_\T^2 = \langle x,\T x\rangle$ for all $x$, and use $\lambda_{\max}$ (resp., $\lambda_{\min}$) to denote the maximum (resp., minimum) eigenvalue of $\T$.
A symmetric linear self-map $\T$ is called positive semidefinite, denoted by $\T\succeq 0$ (resp., positive definite, $\T\succ 0$) if
$\|x\|_\T^2\ge 0$ (resp., $\|x\|_\T^2> 0$) for all nonzero $x$. For two symmetric linear self-maps $\T_1$ and $\T_2$, we use $\T_1\succeq \T_2$ (resp., $\T_1\succ \T_2$) to denote $\T_1 - \T_2\succeq 0$ (resp., $\T_1 - \T_2\succ 0$).

An extended-real-valued function $f$ is called proper if it is finite somewhere and never equals $-\infty$. Such a function is called closed if it is lower semicontinuous.
Given a proper function $f:\R^n \to\oR:=(-\infty,\infty]$, we use the symbol $z\st{f}{\to}x$ to indicate $z\to x$ and $f(z)\to f(x)$. The domain of $f$ is denoted by ${\rm dom}f$ and is defined as ${\rm dom}f=\{x \in \R^n: f(x)<+\infty\}$. Our basic {\em subdifferential} of $f$ at $x\in\mathrm{dom}\,f$ (known also as the limiting subdifferential) is defined by (see, for example, \cite[Definition~8.3]{Rock98})
\begin{equation}\label{ls}
\partial f(x):=\left\{v\in\R^n :\;\exists x^t\st{f}{\to}x,\;v^t\to v\;\mbox{ with }\disp\liminf_{z\to x^t}\frac{f(z)-f(x^t)-\la v^t,z-x^t\ra}{\|z-x^t\|}\ge 0\mbox{ for each }t\right\}.
\end{equation}
It follows immediately from the above definition that this subdifferential has the following robustness property:
\begin{equation}\label{outersemi}
  \left\{v\in \R^n:\; \exists x^t \st{f}{\to}x,\; v^t\to v\;, v^t\in \partial f(x^t)\right\} \subseteq \partial f(x).
\end{equation}
For a convex function $f$ the subdifferential \eqref{ls} reduces to the classical subdifferential in convex analysis (see, for example, \cite[Theorem 1.93]{Boris})
\begin{eqnarray*}
\partial f(x)=\left\{v\in \R^n :\;\langle v,z-x\rangle\le f(z)-f(x)\ \ \forall \ z\in\R^n\right\}.
\end{eqnarray*}
Moreover, for a continuously differentiable function $f$, the subdifferential \eqref{ls} reduces to the derivative of $f$ denoted by $\nabla f$. For a function $f$ with more than one group
of variables, we use $\partial_x f$ (resp., $\nabla_x f$) to denote the subdifferential (resp., derivative) of $f$ with respect to the variable $x$.
Furthermore, we write ${\rm dom}\,\partial f = \{x\in \R^n:\; \partial f(x)\neq \emptyset\}$.

In general, the subdifferential set \eqref{ls} can be nonconvex (e.g., for $f(x)=-|x|$ at $0\in\R$) while $\partial f$
enjoys comprehensive calculus rules based on {\em variational/extremal principles} of variational analysis \cite{Rock98}.
In particular, when $\M$ is a surjective linear map, using \cite[Exercise~8.8(c)]{Rock98} and \cite[Exercise~10.7]{Rock98}, we see that
\[
\partial (h + P\circ \M)(x) = \nabla h( x) + \M^* \partial P(\M x)
\]
for any $x\in {\rm dom}(P\circ \M)$. Hence, at an optimal solution $\bar x$, the following necessary optimality condition always holds:
\begin{equation}\label{optcon2}
  0 \in \partial (h + P\circ \M)(\bar x) = \nabla h(\bar x) + \M^* \partial P(\M\bar x).
\end{equation}
Throughout this paper, we say that $\tilde x$ is a stationary point of \eqref{P1}
if $\tilde x$ satisfies \eqref{optcon2} in place of $\bar x$.

For a continuously differentiable function $\phi$ on $\R^n$, the Bregman distance $D_\phi$ is defined as
\begin{equation*}
  D_\phi(x_1,x_2) := \phi(x_1) - \phi(x_2) - \langle \nabla \phi(x_2), x_1 - x_2\rangle
\end{equation*}
for any $x_1$, $x_2\in \R^n$. If $\phi$ is twice continuously differentiable and there exists $\Q$ so that the Hessian $\nabla^2\phi$ satisfies $[\nabla^2\phi(x)]^2\preceq \Q$ for all $x$, then for any $x_1$ and $x_2$ in $\R^n$, we have
\begin{equation}\label{normbd}
\begin{split}
  &\|\nabla \phi(x_1) - \nabla \phi(x_2)\|^2  = \left\|\int_0^1\nabla^2\phi(x_2 + t(x_1 - x_2))\cdot[x_1 - x_2] dt\right\|^2\\
  &\le \left(\int_0^1\left\|\nabla^2\phi(x_2 + t(x_1 - x_2))\cdot[x_1 - x_2]\right\| dt\right)^2\\
  & = \left(\int_0^1\sqrt{\langle x_1-x_2,[\nabla^2\phi(x_2 + t(x_1 - x_2))]^2\cdot[x_1 - x_2]\rangle }dt\right)^2 \le \|x_1 - x_2\|^2_{\Q}.
\end{split}
\end{equation}
On the other hand, if there exists $\Q$ so that $\nabla^2\phi(x)\succeq \Q$ for all $x$, then
\begin{equation}\label{Bregman}
\begin{split}
  & D_\phi(x_1,x_2) = \int_0^1\langle \nabla \phi(x_2 + t(x_1 - x_2)) - \nabla \phi(x_2), x_1 - x_2\rangle dt\\
  & = \int_0^1\int_0^1 t\langle x_1 - x_2, \nabla^2 \phi(x_2 + st(x_1 - x_2))\cdot[x_1 - x_2]\rangle ds \ dt \ge \frac12 \|x_1 - x_2\|^2_{\Q}
\end{split}
\end{equation}
for any $x_1$ and $x_2$ in $\R^n$.


A semi-algebraic set $S\subseteq \R^n$
is a finite union of sets of the form
\[\{x \in \R^n: h_1(x) = \cdots = h_k(x) = 0,g_1(x) < 0,\ldots,g_l(x) < 0\},
\]
where $h_1,\ldots, h_k$ and $g_1,\ldots,g_l$ are polynomials with real coefficients in $n$ variables. In other words, $S$ is a union of finitely
many sets, each defined by finitely many polynomial equalities and strict inequalities. A map
$F:\R^n \rightarrow \R$ is semi-algebraic if ${\rm gph}F \in \R^{n+1}$ is a semi-algebraic set. Semi-algebraic sets and semi-algebraic mappings
enjoy many nice structural properties.
One important property which we will use later on is the Kurdyka-{\L}ojasiewicz (KL) property. 

\begin{definition}\label{def:KL} {\bf (KL property \& KL function)}
  A proper  function $f$ is said to have the Kurdyka-{\L}ojasiewicz (KL) property at $\hat x\in {\rm dom}\,\partial f$ if there exist $\eta\in (0,\infty]$, a neighborhood $V$ of $\hat x$ and a continuous concave function $\varphi:[0,\eta)\rightarrow {\mathbb{R}}_+$ such that:
  \begin{enumerate}[{\rm (i)}]
    \item $\varphi(0) = 0$ and $\varphi$ is continuously differentiable on $(0,\eta)$ with positive derivatives;
    \item for all $x\in V$ satisfying $f(\hat x)< f(x) < f(\hat x) + \eta$, it holds that
    \begin{equation*}
      \varphi'(f(x) - f(\hat x))\,{\rm dist}(0,\partial f(x))\ge 1.
    \end{equation*}
  \end{enumerate}
  A proper closed function $f$ satisfying the KL property at all points in ${\rm dom}\,\partial f$ is called a KL function.
\end{definition}


It is known that a proper closed semi-algebraic function is a KL function as such a function satisfies the KL property for all points in ${\rm dom}\,\partial f$ with $\varphi(s) = cs^{1-\theta}$ for some $\theta\in [0,1)$ and some $c>0$  (for example, see
\cite[Section~4.3]{AtBoReSo10}; further discussion can be found in \cite[Corollary~16]{BolDanLewShi07} and \cite[Section~2]{BolDanLew07}).
%

\section{Alternating direction method of multipliers 
}\label{sec:Msur}

In this section,
we study the alternating direction method of multipliers for finding a stationary point of \eqref{P1}.
To describe the algorithm, we first reformulate \eqref{P1} as
\begin{equation*}
  \begin{array}{rl}
    \displaystyle \min_{x,y} & h(x) + P(y)\\
    {\rm s.t.} & y = \M x,
  \end{array}
\end{equation*}
to decouple the linear map and the nonsmooth part. Recall that the augmented Lagrangian function for the above problem is defined, for each $\beta>0$, as:
\begin{equation*}
  L_\beta(x,y,z) := h(x) + P(y) - \langle z,\M x - y\rangle + \frac{\beta}{2}\|\M x - y\|^2.
\end{equation*}
Our algorithm is then presented as follows:\\

\vspace{.1 in}
\fbox{\parbox{5.7 in}{
\begin{description}
\item {\bf Proximal ADMM}

\item[Step 0.] Input $(x^0, z^0)$, $\beta > 0$ and a twice continuously differentiable convex function $\phi(x)$.

\item[Step 1.] Set
\begin{equation}\label{scheme}
\left\{
\begin{split}
&y^{t+1}\in\Argmin_{y} L_\beta(x^t,y,z^t), \\
&x^{t+1}\in\Argmin_{x} \{L_\beta(x,y^{t+1},z^t) + D_\phi(x,x^t)\},\\
&z^{t+1}=z^t-\beta  (\M x^{t+1}- y^{t+1}).
\end{split}
\right.
\end{equation}

\item[Step 2.] If a termination criterion is not met, go to Step 1.
\end{description}
}}
\vspace{.1 in}\\

Notice that the first subproblem is essentially computing the proximal mapping of $\tau P$ for some $\tau > 0$. The above
algorithm is called the proximal ADMM since,
in the second subproblem, we allow a proximal term $D_\phi$ and hence a choice of $\phi$ to simplify this subproblem. If $\phi=0$, then this algorithm reduces to the usual
ADMM described in, for example, \cite{EckB92}. For other popular non-trivial choices of $\phi$, see Remark~\ref{rem0} below.

We next study global convergence of the above algorithm under suitable assumptions. Specifically, we consider the following assumption.
\begin{assumption}\label{assumption}
  \begin{enumerate}[{\rm (i)}]
    \item $\M\M^*\succeq \sigma \I$ for some $\sigma > 0$; and there exist $\Q_1$, $\Q_2$ such that for all $x$,
    $\Q_1\succeq \nabla^2 h(x)\succeq \Q_2$.
    \item $\beta>0$ and $\phi$ are chosen so that
    \begin{itemize}
      \item there exist $\T_1\succeq \T_2 \succeq 0$ so that $\T^2_1 \succeq [\nabla^2\phi(x)]^2\succeq \T^2_2$ for all $x$;
      \item $\Q_2 + \beta \M^*\M + \T_2\succeq \delta \I$ for some $\delta > 0$;
      \item with $\Q_3\succeq [\nabla^2 h(x) + \nabla^2 \phi(x)]^2$ for all $x$, there exists $\gamma \in (0,1)$ so that
      \begin{equation*}
      \delta \I + \T_2 \succ \frac{2}{\sigma\beta}\H_\gamma, \ \ {\rm where}\ \ \H_\gamma := \left(\frac{1}{\gamma}\Q_3 + \frac{1}{1-\gamma}{\T}_1^2\right).
      \end{equation*}

    \end{itemize}
  \end{enumerate}
\end{assumption}
 \begin{remark}{\bf (Comments on Assumption 1)}\label{rem0}
  Point (i) says $\M$ is surjective.
  The first and second points in (ii) would be satisfied if $\phi(x)$ is chosen to be $\frac{L}{2}\|x\|^2 - h(x)$,
  where $L$ is at least as large as the Lipschitz continuity modulus of $\nabla h(x)$. In this case, one can pick $\T_1 = 2L \I$ and $\T_2 = 0$. This choice is of particular interest
  since it simplifies the $x$-update in \eqref{scheme} to a convex quadratic programming problem; see \cite[Section~2.1]{WangBan13}.
  Indeed, under this choice, we have
  \[
  D_\phi(x,x^t) = \frac{L}2 \|x - x^t\|^2 - h(x) + h(x^t) + \langle \nabla h(x^t),x-x^t\rangle,
  \]
  and hence the second subproblem becomes
  \[
    \min\limits_x\ \frac{L}2 \|x - x^t\|^2 + \langle \nabla h(x^t) - \M^*z^t,x-x^t\rangle + \frac{\beta}{2}\|\M x - y^{t+1}\|^2.
  \]
  Finally, point 3 in (ii) can always be enforced by picking $\beta$ sufficiently large if $\phi$, $\T_1$ and $\T_2$, are chosen
  independently of $\beta$. In addition, in the case where $\T_1 = 0$ and hence $\T_2 = 0$, it is not hard to show that the requirement that $\delta \I + \T_2 \succ \frac{2}{\sigma\beta}\H_\gamma$ for some $\gamma \in (0,1)$ is indeed equivalent to imposing $\delta \I \succ \frac{2}{\sigma\beta}\Q_3$.
  \end{remark}
%

Before stating our convergence results, we note
first that from the optimality conditions, the iterates generated satisfy
\begin{equation}\label{eq1}
\begin{split}
  0& \in \partial P(y^{t+1}) + z^t - \beta(\M x^t - y^{t+1}),\\
  0& = \nabla h(x^{t+1}) - \M^* z^t + \beta\M^*(\M x^{t+1} - y^{t+1}) + (\nabla \phi(x^{t+1}) - \nabla \phi(x^t)).
\end{split}
\end{equation}
Hence, if
\begin{equation}\label{eq2:limit}
  \lim_{t \rightarrow \infty} \|y^{t+1} - y^t\|^2 + \|x^{t+1} - x^t\|^2 + \|z^{t+1} - z^t\|^2 = 0,
\end{equation}
and if for a cluster point $(x^*,y^*,z^*)$ of the sequence $\{(x^t,y^t,z^t)\}$, we have
\begin{equation}\label{eq:Plimit}
\lim_{i\rightarrow\infty}P(y^{t_i+1}) = P(y^*)
\end{equation}
along a convergent subsequence $\{(x^{t_i},y^{t_i},z^{t_i})\}$ that converges to $(x^*,y^*,z^*)$,
then $x^*$ is a stationary point of \eqref{P1}. To see this, notice from \eqref{eq1} and the definition of $z^{t+1}$ that
\begin{equation}\label{eq:iterop}
\left\{
\begin{split}
  &-z^{t+1} - \beta \M(x^{t+1} - x^t) \in \partial P(y^{t+1}),\\
  &\nabla h(x^{t+1}) - \M^* z^{t+1} = -\nabla \phi(x^{t+1}) + \nabla \phi(x^t),\\
  &\M x^{t+1} - y^{t+1} = \frac1\beta (z^t - z^{t+1}).
\end{split} \right.
\end{equation}
Passing to the limit in \eqref{eq:iterop} along the subsequence $\{(x^{t_i},y^{t_i},z^{t_i})\}$ and invoking \eqref{eq2:limit}, \eqref{eq:Plimit} and \eqref{outersemi}, it follows that
\begin{equation}\label{eq:optcon}
  \nabla h(x^*) = \M^* z^*,\ \ -z^* \in \partial P(y^*),\ \ y^* = \M x^*.
\end{equation}
In particular, $x^*$ is a stationary point of the model problem \eqref{P1}. 

We now state our global convergence result. Our first conclusion
establishes \eqref{eq2:limit} under Assumption~\ref{assumption}, and so, any cluster point of the sequence generated from the proximal ADMM produces a stationary point of our model problem \eqref{P1}
such that \eqref{eq:optcon} holds. In the case where $h$ is a nonconvex quadratic function
with a negative semi-definite Hessian matrix  and $P$
is the sum of the $\ell_1$ norm and the indicator function of the Euclidean norm ball, the convergence of the ADMM (i.e., proximal ADMM with $\phi = 0$) was established in \cite{AmesHong14}. Our convergence analysis below follows the recent work in \cite[Section~3.3]{AmesHong14} and \cite{WenPengLiuBaiSun13}. Specifically, we follow the idea in \cite{WenPengLiuBaiSun13} to study the behavior of the augmented Lagrangian function along the sequence generated from the proximal ADMM; we note that this was subsequently also used in \cite[Section~3.3]{AmesHong14}. We then bound the changes in $\{z^t\}$ by those of $\{x^t\}$, following the brilliant observation in \cite[Section~3.3]{AmesHong14}
that the changes in the dual iterates can be controlled by the changes in the primal iterates that correspond to the quadratic in their objective. However, we would like to point out two major modifications: {\bf (i)} The proof in \cite[Section~3.3]{AmesHong14} cannot be directly applied because our subproblem corresponding to the $y$-update is not convex due to the possible nonconvexity of $P$. Our analysis is also complicated by the introduction of the proximal term. {\bf (ii)} Using the special structure of their problem, the authors in \cite[Section~3.3]{AmesHong14} established that the augmented Lagrangian for their problem is uniformly bounded below along the sequence generated from their ADMM. In contrast, we {\em assume} existence of cluster points in our convergence analysis below and will discuss sufficient conditions for such an assumption in Theorem \ref{thm:boundedness2}. On the other hand, we have to point out that although our sufficient conditions for boundedness of sequence are general enough to cover a wide range of applications, they do {\em not} cover the particular problem studied in \cite{AmesHong14}.

Our second conclusion, which is new in the literature studying convergence of ADMM in the nonconvex scenarios, states that if the algorithm is suitably initialized, we can get a strict improvement in the objective values. In particular, if suitably initialized, one will not end up with a stationary point with a larger objective value.

\begin{theorem}\label{thm:main}
  Suppose that Assumption~\ref{assumption} holds. Then we have the following results.
  \begin{enumerate}[{\rm (i)}]
    \item {\bf (Global subsequential convergence)} If the sequence $\{(x^t,y^t,z^t)\}$ generated from the proximal ADMM has a cluster point $(x^*,y^*,z^*)$, then \eqref{eq2:limit} holds. Moreover, $x^*$ is a stationary point of \eqref{P1} such that \eqref{eq:optcon} holds.
    \item {\bf (Strict improvement in objective values)} Suppose that the algorithm is initialized at
  a non-stationary $x^0$ with $h(x^0) + P(\M x^0) < \infty$, and $z^0$ satisfying $\M^* z^0 = \nabla h(x^0)$. Then for any cluster point $(x^*,y^*,z^*)$ of the sequence $\{(x^t,y^t,z^t)\}$, if exists, we have
  \[
  h(x^*) + P(\M x^*) < h(x^0) + P(\M x^0).
  \]
  \end{enumerate}
\end{theorem}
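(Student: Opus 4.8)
The plan is to track a single potential (Lyapunov) function along the iterates, show it drops by a positive multiple of $\|x^{t+1}-x^t\|^2$ at every step, and combine its monotonicity with the assumed cluster point to force all successive differences to zero. Once \eqref{eq2:limit} and \eqref{eq:Plimit} are established, the stationarity assertion in part (i) follows verbatim from the computation already carried out before the theorem (pass to the limit in \eqref{eq:iterop} to get \eqref{eq:optcon}). Essentially all the work is in proving \eqref{eq2:limit}.

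\emph{Sufficient decrease and the potential.} I would analyze the change of $L_\beta^t:=L_\beta(x^t,y^t,z^t)$ across the three updates in \eqref{scheme}. Since $y^{t+1}$ minimizes $L_\beta(x^t,\cdot,z^t)$, the $y$-step only gives $L_\beta(x^t,y^{t+1},z^t)-L_\beta^t\le 0$; this is precisely where nonconvexity of $P$ bites, as one cannot extract a $-\frac{\beta}{2}\|y^{t+1}-y^t\|^2$ gain, so \emph{all} quadratic descent must come from the $x$-step. For the latter, the minimized objective has Hessian $\nabla^2 h+\beta\M^*\M+\nabla^2\phi\succeq\Q_2+\beta\M^*\M+\T_2\succeq\delta\I$, hence is $\delta$-strongly convex; here $[\nabla^2\phi]^2\succeq\T_2^2$ forces $\nabla^2\phi\succeq\T_2$ by operator monotonicity of the square root, which via \eqref{Bregman} also gives $D_\phi(x^{t+1},x^t)\ge\frac12\|x^{t+1}-x^t\|^2_{\T_2}$, so the $x$-step drops $L_\beta$ by $\frac{\delta}{2}\|x^{t+1}-x^t\|^2+\frac12\|x^{t+1}-x^t\|^2_{\T_2}$. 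The $z$-step \emph{raises} $L_\beta$ by exactly $\frac1\beta\|z^{t+1}-z^t\|^2$. This dual ascent is tamed by surjectivity: from $\M\M^*\succeq\sigma\I$ and the second identity in \eqref{eq:iterop}, $\M^*(z^{t+1}-z^t)=\nabla(h+\phi)(x^{t+1})-\nabla(h+\phi)(x^t)-(\nabla\phi(x^t)-\nabla\phi(x^{t-1}))$, so $\|z^{t+1}-z^t\|^2\le\frac1\sigma\|\M^*(z^{t+1}-z^t)\|^2$, a weighted Young split with parameter $\gamma$, and \eqref{normbd} give
\[
\tfrac1\beta\|z^{t+1}-z^t\|^2\le\tfrac{1}{\sigma\beta}\left(\tfrac1\gamma\|x^{t+1}-x^t\|^2_{\Q_3}+\tfrac{1}{1-\gamma}\|x^t-x^{t-1}\|^2_{\T_1^2}\right).
\]
The lagged $t-1$ term is absorbed by working with $\Phi^t:=L_\beta^t+\frac{1}{\sigma\beta(1-\gamma)}\|x^t-x^{t-1}\|^2_{\T_1^2}$: the lagged terms telescope, and recognizing $\frac1\gamma\Q_3+\frac1{1-\gamma}\T_1^2=\H_\gamma$ yields $\Phi^{t+1}-\Phi^t\le-\frac12\|x^{t+1}-x^t\|^2_{\delta\I+\T_2-\frac{2}{\sigma\beta}\H_\gamma}\le-\frac c2\|x^{t+1}-x^t\|^2$ for some $c>0$, since $\delta\I+\T_2\succ\frac{2}{\sigma\beta}\H_\gamma$ by Assumption~\ref{assumption}. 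The crux of the whole argument is exactly this routing: since the $y$-update is genuinely nonconvex, the descent must be channeled entirely through the strongly convex $x$-update while the dual ascent, itself only controllable via a one-step-lagged primal quantity, is soaked up by the correctly weighted $\Phi^t$.

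\emph{Completing part (i).} Monotonicity gives $\Phi^t\downarrow\Phi^\infty$. Along the subsequence converging to $(x^*,y^*,z^*)$, continuity of $h$ and lower semicontinuity (and properness) of $P$ give $\liminf L_\beta^{t_i}>-\infty$ (the upper bound $L_\beta^{t_i}\le\Phi^{t_i}\le\Phi^1$ keeps $P(y^{t_i})$ finite), so $\Phi^\infty>-\infty$; summing the per-step drop yields $\sum_t\|x^{t+1}-x^t\|^2<\infty$, hence $\|x^{t+1}-x^t\|\to0$. The displayed bound then forces $\|z^{t+1}-z^t\|\to0$, and $y^{t+1}=\M x^{t+1}-\frac1\beta(z^t-z^{t+1})$ forces $\|y^{t+1}-y^t\|\to0$, establishing \eqref{eq2:limit}. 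Finally, \eqref{eq:Plimit} follows by testing the $y$-subproblem at the feasible point $y=y^*$ (giving $\limsup_i P(y^{t_i+1})\le P(y^*)$ after the quadratic terms cancel in the limit) together with lower semicontinuity for the reverse bound; the pre-theorem computation then delivers \eqref{eq:optcon}.

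\emph{Part (ii).} The initialization $\M^*z^0=\nabla h(x^0)$ is precisely what removes the (nonexistent) $x^{-1}$ term at $t=0$: the second line of \eqref{eq:iterop} gives $\M^*(z^1-z^0)=\nabla(h+\phi)(x^1)-\nabla(h+\phi)(x^0)$, so repeating the one-step estimate, starting from the feasible bound $L_\beta(x^0,y^1,z^0)\le h(x^0)+P(\M x^0)$, yields $\Phi^1\le h(x^0)+P(\M x^0)-\frac{c''}{2}\|x^1-x^0\|^2$ with $c''>0$. Identifying $\Phi^\infty=\lim_i L_\beta^{t_i}=h(x^*)+P(\M x^*)$ (using $P(y^{t_i})\to P(y^*)$, argued as for \eqref{eq:Plimit} but one index back, and $y^*=\M x^*$) and chaining $\Phi^\infty\le\Phi^1$ gives $h(x^*)+P(\M x^*)\le h(x^0)+P(\M x^0)-\frac{c''}{2}\|x^1-x^0\|^2$. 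It then remains only to show $x^1\ne x^0$: if $x^1=x^0$, then \eqref{eq:iterop} forces $z^1=z^0$ (injectivity of $\M^*$), $y^1=\M x^0$, and $-z^0\in\partial P(\M x^0)$, whence $0\in\nabla h(x^0)+\M^*\partial P(\M x^0)=\partial(h+P\circ\M)(x^0)$, contradicting non-stationarity of $x^0$ via \eqref{optcon2}; hence the inequality is strict.
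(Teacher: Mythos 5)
Your proposal is correct and takes essentially the same route as the paper's proof: the same three-term decomposition of the augmented Lagrangian change (nonconvex $y$-step $\le 0$, $\delta$-strongly convex $x$-step, dual ascent controlled via surjectivity and the $\gamma$-weighted Young split), the same lower-semicontinuity argument at the cluster point, the same $y$-subproblem test for \eqref{eq:Plimit}, and the same initialization/contradiction argument in part (ii). Your potential $\Phi^t = L_\beta(x^t,y^t,z^t) + \tfrac{1}{\sigma\beta(1-\gamma)}\|x^t-x^{t-1}\|^2_{\T_1^2}$ is merely a repackaging of the paper's telescoped sum \eqref{eq:bigrelation} (the paper itself uses exactly this potential in the proof of Theorem~\ref{thm:boundedness2}), so the difference is cosmetic.
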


\begin{remark}\label{rem1}
The proximal ADMM does not necessarily guarantee that the objective value of \eqref{P1} is decreasing along the sequence $\{x^t\}$ generated. However, under the assumptions in Theorem~\ref{thm:main}, any cluster point of the sequence generated from the proximal ADMM improves the starting (non-stationary) objective value.

  We now describe one way of choosing the initialization as suggested in (ii) when $P$ is nonconvex. In this case, it is common to approximate $P$ by a proper closed convex function $\tilde P$ and obtain a relaxation to \eqref{P1}, i.e.,
 \[
 \min_x \ h(x) + \tilde P(\M x).
 \]
 Then any stationary point $\tilde x$ of this relaxed problem, if exists, satisfies $-\nabla h(\tilde x)\in \M^*\partial \tilde P(\M \tilde x)$. Thus, if $P(\M\tilde x) < \infty$, then one can initialize the proximal ADMM by taking $x^0 =\tilde x$ and $z^0\in-\partial \tilde P(\M \tilde x)$ with $\nabla h(\tilde x) = \M^*z^0$, so that the conditions in (ii) are satisfied.
\end{remark}

\begin{proof}

  We start by showing that \eqref{eq2:limit} holds.
  First, observe from the second relation in \eqref{eq:iterop} that
  \begin{equation}\label{Mzhrelation}
    \M^* z^{t+1} = \nabla h(x^{t+1}) + \nabla \phi(x^{t+1}) - \nabla \phi(x^t).
  \end{equation}
  Consequently, we have
  \begin{equation*}
    \M^* (z^{t+1} - z^t) = \nabla h(x^{t+1}) - \nabla h(x^t) + (\nabla \phi(x^{t+1}) - \nabla \phi(x^t)) - (\nabla \phi(x^{t}) - \nabla \phi(x^{t-1})).
  \end{equation*}
  Taking norm on both sides, squaring and making use of (i) in Assumption~\ref{assumption}, we obtain further that
  \begin{equation}\label{eq6}
    \begin{split}
      & \sigma\|z^{t+1} - z^t\|^2  \le \|\M^* (z^{t+1} - z^t)\|^2 \\
      & = \|\nabla h(x^{t+1}) - \nabla h(x^t) + (\nabla \phi(x^{t+1}) - \nabla \phi(x^t)) - (\nabla \phi(x^{t}) - \nabla \phi(x^{t-1}))\|^2\\
      & \le \frac{1}{\gamma}\|\nabla h(x^{t+1}) - \nabla h(x^t) + \nabla \phi(x^{t+1}) - \nabla \phi(x^t)\|^2 + \frac1{1-\gamma}\|\nabla \phi(x^{t}) - \nabla \phi(x^{t-1})\|^2 \\
      & \le \frac{1}{\gamma}\|x^{t+1}-x^t\|_{\Q_3}^2 + \frac1{1-\gamma}\|x^{t} - x^{t-1}\|_{\T_1^2}^2,
    \end{split}
  \end{equation}
  where $\gamma\in (0,1)$ is defined in point 3 in (ii) of Assumption~\ref{assumption}, and we made use of the relation $\|a + b\|^2 \le \frac{1}{\gamma}\|a\|^2 + \frac1{1 - \gamma} \|b\|^2$ for the first inequality, while the last inequality follows from points 1 and 3 in (ii) of Assumption~\ref{assumption}, and \eqref{normbd}.
  On the other hand, from the definition of $z^{t+1}$, we have
  \begin{equation*}
    y^{t+1} = \M x^{t+1} + \frac{1}{\beta}(z^{t+1} - z^t),
  \end{equation*}
  which implies
  \begin{equation}\label{eq7}
    \|y^{t+1} - y^t\| \le \|\M(x^{t+1}-x^t)\| + \frac{1}{\beta}\|z^{t+1} - z^t\| + \frac{1}{\beta}\|z^t - z^{t-1}\|.
  \end{equation}
  In view of \eqref{eq6} and \eqref{eq7}, to establish \eqref{eq2:limit}, it suffices to show that
  \begin{equation}\label{eq2:limit2}
    \lim_{t \rightarrow \infty} \|x^{t+1} - x^t\| = 0.
  \end{equation}

  We now prove \eqref{eq2:limit2}. We start by noting that
  \begin{equation}\label{term1}
  \begin{split}
    &L_\beta(x^{t+1},y^{t+1},z^{t+1}) - L_\beta(x^{t+1},y^{t+1},z^t) = - (z^{t+1}-z^t)^T(\M x^{t+1} - y^{t+1})\\
    & = \frac{1}{\beta}\|z^{t+1}-z^t\|^2 \le \frac{1}{\sigma\beta}(\|x^{t+1}-x^t\|_{\frac1\gamma \Q_3}^2 + \|x^{t} - x^{t-1}\|_{\frac1{1 - \gamma} \T_1^2}^2).
  \end{split}
  \end{equation}
  Next, recall from \cite[Page~553,~Ex.17]{HJ91} that the operation of taking positive square root preserves the positive semidefinite ordering. Thus, point 1 in (ii) of Assumption~\ref{assumption} implies that $\nabla^2\phi(x)\succeq \T_2$ for all $x$. From this and
  point 2 in (ii) of Assumption~\ref{assumption}, we see further that the function $x\mapsto L_\beta(x,y^{t+1},z^t) + D_\phi(x,x^t)$
  is strongly convex with modulus at least $\delta$. Using this, the definition
  of $x^{t+1}$ (as a minimizer) and \eqref{Bregman}, we have
  \begin{equation}\label{term2}
    \begin{split}
      L_\beta(x^{t+1},y^{t+1},z^t) - L_\beta(x^t,y^{t+1},z^t) \le -\frac{\delta}{2}\|x^{t+1}-x^t\|^2 - \frac12\|x^{t+1}-x^t\|_{\T_2}^2.
    \end{split}
  \end{equation}
  Moreover, using the definition of $y^{t+1}$ as a minimizer, we have
  \begin{equation}\label{term3}
    \begin{split}
      L_\beta(x^t,y^{t+1},z^t) - L_\beta(x^t,y^t,z^t) \le 0.
    \end{split}
  \end{equation}
  Summing \eqref{term1}, \eqref{term2} and \eqref{term3}, we obtain that
  \begin{equation}\label{eq:smallrelation}
  \begin{split}
  &L_\beta(x^{t+1},y^{t+1},z^{t+1}) - L_\beta(x^t,y^t,z^t) \\
  &\le\frac12\|x^{t+1}-x^t\|_{\frac{2}{\sigma\beta\gamma}\Q_3-\delta \I -\T_2}^2 +\frac12\|x^{t} - x^{t-1}\|_{\frac{2}{\sigma\beta(1 - \gamma)}\T_1^2}^2.
  \end{split}
  \end{equation}
  Summing the above relation from $t=M,...,N-1$ with $M \ge 1$, we see that
  \begin{equation}\label{eq:bigrelation}
  \begin{split}
  &    L_\beta(x^N,y^N,z^N) - L_\beta(x^M,y^M,z^M)\\
  &\le\frac12\sum_{t=M}^{N-1}\|x^{t+1}-x^t\|_{\frac{2}{\sigma\beta\gamma}\Q_3-\delta \I -\T_2}^2 +\frac12\sum_{t=M}^{N-1}\|x^{t} - x^{t-1}\|_{\frac{2}{\sigma\beta(1-\gamma)}\T_1^2}^2\\
  &=\frac12\sum_{t=M}^{N-1}\|x^{t+1}-x^t\|_{\frac{2}{\sigma\beta\gamma}\Q_3-\delta \I -\T_2}^2 +\frac12\sum_{t=M-1}^{N-2}\|x^{t+1} - x^{t}\|_{\frac{2}{\sigma\beta(1-\gamma)}\T_1^2}^2\\
  &=\frac12\sum_{t=M}^{N-2}\|x^{t+1}-x^t\|_{\frac{2}{\sigma\beta}\H_\gamma-\delta \I-\T_2}^2 + \frac12\|x^N-x^{N-1}\|_{\frac{2}{\sigma\beta\gamma}\Q_3-\delta \I -\T_2}^2 +\frac12\|x^{M} - x^{M-1}\|_{\frac{2}{\sigma\beta(1-\gamma)}\T_1^2}^2\\
  &\le -\frac12\sum_{t=M}^{N-2}\|x^{t+1}-x^t\|_{\cR}^2 +\frac12\|x^{M} - x^{M-1}\|_{\frac{2}{\sigma\beta(1-\gamma)}\T_1^2}^2,
  \end{split}
  \end{equation}
  where $\cR:= \delta \I+\T_2 - \frac{2}{\sigma\beta}\H_\gamma \succ 0$ due to point 3 in (ii) of Assumption~\ref{assumption}; and the last inequality follows from $\delta \I+\T_2 - \frac{2}{\sigma\beta\gamma}Q_3 \succeq \cR \succ 0$.

  Now, suppose that $(x^*,y^*,z^*)$ is a cluster point of the sequence $\{(x^t,y^t,z^t)\}$ and consider a convergent subsequence, i.e.,
  \begin{equation}\label{subcon}
  \lim_{i\rightarrow \infty}(x^{t_i},y^{t_i},z^{t_i}) = (x^*,y^*,z^*).
  \end{equation}
  From lower semicontinuity of $L$, we see that
  \begin{equation}\label{eq:limitL}
  \liminf_{i\rightarrow \infty}L_\beta(x^{t_i},y^{t_i},z^{t_i})
  \ge h(x^*) + P(y^*) - \langle z^*,\M x^* - y^*\rangle + \frac{\beta}{2}\|\M x^* - y^*\|^2 > -\infty,
  \end{equation}
  where the last inequality follows from the properness assumption on $P$.
  On the other hand, putting $M = 1$ and $N = t_i$ in \eqref{eq:bigrelation}, we see
  that
  \begin{equation}\label{eq:bigrelation2}
  L_\beta(x^{t_i},y^{t_i},z^{t_i}) - L_\beta(x^1,y^1,z^1)\le -\frac12\sum_{t=1}^{t_i-2}\|x^{t+1}-x^t\|_{\cR}^2 +\frac12\|x^{1} - x^{0}\|_{\frac{2}{\sigma\beta(1-\gamma)}\T_1^2}^2.
  \end{equation}
  Passing to the limit in \eqref{eq:bigrelation2} and making use of \eqref{eq:limitL} and (ii) in Assumption~\ref{assumption}, we conclude that
  \[
  0\ge -\frac12\sum_{t=1}^{\infty}\|x^{t+1}-x^t\|_{\cR}^2 > -\infty
  \]
  The desired relation \eqref{eq2:limit2} now follows from this and the fact that $\cR\succ 0$. Consequently, \eqref{eq2:limit} holds.

  We next show that \eqref{eq:Plimit} holds along the convergent subsequence in \eqref{subcon}. Indeed, from the definition of $y^{t_i}$ (as a minimizer), we have
  \[
  L_\beta(x^{t_i},y^{t_i+1},z^{t_i}) \le L_\beta(x^{t_i},y^*,z^{t_i}).
  \]
  Taking limit and using \eqref{subcon}, we see that
  \[
  \limsup_{i\rightarrow \infty}L_\beta(x^{t_i},y^{t_i+1},z^{t_i})\le h(x^*) + P(y^*) - \langle z^*,\M x^* - y^*\rangle + \frac{\beta}{2}\|\M x^* - y^*\|^2.
  \]
  On the other hand, from lower semicontinuity, \eqref{subcon} and \eqref{eq2:limit}, we have
  \[
  \liminf_{i\rightarrow \infty}L_\beta(x^{t_i},y^{t_i+1},z^{t_i})
  \ge h(x^*) + P(y^*) - \langle z^*,\M x^* - y^*\rangle + \frac{\beta}{2}\|\M x^* - y^*\|^2.
  \]
  The above two relations show that $\lim\limits_{i\rightarrow \infty} P(y^{t_i+1}) = P(y^*)$. This together with \eqref{eq2:limit} and the discussions preceding this theorem shows that $x^*$ is a stationary point of \eqref{P1} and that \eqref{eq:optcon} holds. This proves (i).

  Next, we suppose that the algorithm is initialized at
  a non-stationary $x^0$ with $h(x^0) + P(\M x^0) < \infty$ and $z^0$ chosen with $\M^* z^0 = \nabla h(x^0)$; we also write $y^0 = \M x^0$. We first show that $x^1\neq x^0$. To this end, we notice that
  \begin{equation*}
  \begin{split}
    \M^*(z^1 - z^0) &= \nabla h(x^1) + \nabla \phi(x^1) - \nabla \phi(x^0) - \M^* z^0\\
    &= \nabla h(x^1) - \nabla h(x^0) + \nabla \phi(x^1) - \nabla \phi(x^0).
  \end{split}
  \end{equation*}
  Proceeding as in \eqref{eq6}, we have
  \begin{equation}\label{eq8}
    \sigma\|z^1 - z^0\|^2 \le \frac1\gamma\|x^1-x^0\|_{\Q_3}^2.
  \end{equation}
  On the other hand, combining the relations $z^1 = z^0 - \beta(\M x^1 - y^1)$ and $y^0 = \M x^0$, we see that
  \begin{equation}\label{eq9}
    y^1 - y^0 = \M (x^1 - x^0) + \frac{1}{\beta}(z^1 - z^0).
  \end{equation}
  Consequently, if $x^1 = x^0$, then it follows from \eqref{eq8} and \eqref{eq9} that $z^1=z^0$ and $y^1=y^0$. This together with \eqref{eq:iterop} implies that
  \[
  0 \in \nabla h(x^0) + \M^* \partial P(\M x^0),
  \]
  i.e., $x^0$ is a stationary point.
  Since $x^0$ is non-stationary by assumption, we must have $x^1 \neq x^0$.

  We now derive an upper bound on $L_\beta(x^N,y^N,z^N) - L_\beta(x^0,y^0,z^0)$ for any $N > 1$. To this end,
  using the definition of augmented Lagrangian function, the $z$-update and \eqref{eq8}, we have
  \begin{equation*}
  \begin{split}
    L_\beta(x^1,y^1,z^1) - L_\beta(x^1,y^1,z^0) = \frac{1}{\beta}\|z^1-z^0\|^2 \le \frac{1}{\sigma\beta\gamma}\|x^1-x^0\|_{\Q_3}^2 \ .
  \end{split}
  \end{equation*}
  Combining this relation with \eqref{term2} and \eqref{term3}, we obtain
  the following estimate
  \begin{equation}\label{diffL1}
    L_\beta(x^1,y^1,z^1) - L_\beta(x^0,y^0,z^0) \le \frac12\|x^1-x^0\|_{\frac{2}{\sigma\beta\gamma}\Q_3-\delta \I -\T_2}^2.
  \end{equation}
  On the other hand, by specializing \eqref{eq:bigrelation}
  to $N > M = 1$ and recalling that $\cR \succ 0$, we see that
  \begin{equation}\label{diffLM}
    \begin{split}
     L_\beta(x^N,y^N,z^N) - L_\beta(x^1,y^1,z^1)&\le -\frac12\sum_{t=1}^{N-2}\|x^{t+1}-x^t\|_{\cR}^2 +\frac12\|x^1 - x^0\|_{\frac{2}{\sigma\beta(1-\gamma)}\T_1^2}^2\\
     & \le \frac12\|x^1 - x^0\|_{\frac{2}{\sigma\beta(1-\gamma)}\T_1^2}^2.
  \end{split}
  \end{equation}
  Combining \eqref{diffL1}, \eqref{diffLM} and the definition of $\cR$, we obtain
  \[
  L_\beta(x^N,y^N,z^N) - L_\beta(x^0,y^0,z^0) \le -\frac12\|x^1-x^0\|_{\cR}^2 < 0,
  \]
  where the strictly inequality follows from the fact that $x^1\neq x^0$, and the fact that $\cR \succ 0$. The conclusion of the theorem now follows by taking limit in the above inequality along any convergent subsequence, and noting that  $y^0 = \M x^0$ by assumption, and that $y^* = \M x^*$.
%
\end{proof}

We illustrate in the following examples how the parameters can be chosen in special cases.
\begin{example}\label{example:3}
  Suppose that $\M = \I$ and that $\nabla h$ is Lipschitz continuous with modulus bounded by $L$. Then one can take $\Q_1 = L \I$ and $\Q_2 = -L\I$. Moreover,
  Assumption~\ref{assumption}(i) holds with $\sigma = 1$. Furthermore, one can take $\phi(x) = \frac{L}{2}\|x\|^2 - h(x)$ so that $\T_1 = 2L\I$, $\T_2 = 0$ and $\Q_3 = L^2\I$. For the second and third points of Assumption~\ref{assumption}(ii) to hold, one can choose $\gamma = \frac12$ and then $\beta$ can be chosen so that $\beta - L = \delta > 0$
  and that
  \[
  \delta > \frac{4}{\beta}L^2 + \frac{4}{\beta}(2L)^2 = \frac{20}{\beta}L^2.
  \]
  These can be achieved by picking $\beta > 5L$.
\end{example}

\begin{example}\label{example:4}
  Suppose again that $\M = \I$ and $h(x) = \frac{1}{2}\|\A x - b\|^2$ for some linear map $\A$ and vector $b$. Then one can take $\phi = 0$
  so that $\T_1 = \T_2 = 0$, and $\Q_1 = L\I$, $\Q_2 = 0$, $\Q_3 = L^2\I$, where $L = \lambda_{\max}(\A^*\A)$. Observe that Assumption~\ref{assumption}(i) holds with $\sigma = 1$.
  For the second and third points of Assumption~\ref{assumption}(ii) to hold, we only need to pick $\beta$ so that $\beta = \delta > \frac{2}{\beta}L^2$, i.e., $\beta > \sqrt{2}L$,
  while $\gamma$ can be any number chosen from $(\frac{\sqrt{2}L}{\beta},1)$.
\end{example}

\begin{example}\label{example:5}
  Suppose that $\M$ is a general surjective linear map and $h$ is strongly convex. Specifically, assume that $h(x) = \frac{1}{2}\|x - \hat x\|^2$ for some $\hat x$ so that $\Q_1=\Q_2= \I$. Then we can take $\phi=0$ and hence $\T_1=\T_2=0$, $\Q_3 = \I$. Assumption~\ref{assumption}(i) holds with $\sigma = \lambda_{\min}(\M\M^*)$.
  The second point of Assumption~\ref{assumption}(ii) holds with $\delta = 1$. For the third point to hold, it suffices to pick $\beta > 2/\sigma$, while $\gamma$ can be any number chosen from $(\frac{2}{\sigma\beta},1)$.
\end{example}


We next give some sufficient conditions under which the sequence $\{(x^t,y^t,z^t)\}$ generated from the proximal ADMM under Assumption~\ref{assumption} is bounded. This would guarantee the existence of cluster point, which is the assumption required in Theorem~\ref{thm:main}.

\begin{theorem}{\bf (Boundedness of sequence generated from the proximal ADMM)}\label{thm:boundedness2}
  Suppose that Assumption~\ref{assumption} holds, and $\beta$ is further chosen so that there exists $0<\zeta < 2\beta\gamma$ with
  \begin{equation}\label{hlowerbound2}
    \inf_{x} \left\{h(x) - \frac{1}{\sigma\zeta}\|\nabla h(x)\|^2\right\} =:h_0 > -\infty.
  \end{equation}
  Suppose that either
  \begin{enumerate}[{\rm (i)}]
    \item $\M$ is invertible and $\liminf_{\|y\|\rightarrow \infty}P(y)=\infty$; or
    \item $\liminf_{\|x\|\rightarrow \infty} h(x)=\infty$ and $\inf_y P(y) > -\infty$.
  \end{enumerate}
  Then the sequence $\{(x^t,y^t,z^t)\}$ generated from the proximal ADMM is bounded.
\end{theorem}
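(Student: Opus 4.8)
The plan is to exhibit a Lyapunov function that is nonincreasing along the iterates and is at the same time bounded below by coercive quantities, so that its uniform boundedness forces the iterates to stay bounded. Concretely, I would set
\[
\Phi_t := L_\beta(x^t,y^t,z^t) + \frac12\|x^t - x^{t-1}\|^2_{\frac{2}{\sigma\beta(1-\gamma)}\T_1^2}.
\]
Substituting the one-step estimate \eqref{eq:smallrelation} and collecting the two $\|x^{t+1}-x^t\|^2$ contributions, the combined weight becomes exactly $\frac{2}{\sigma\beta}\H_\gamma - \delta\I - \T_2 = -\cR$; since $\cR\succ0$ by point~3 in (ii) of Assumption~\ref{assumption}, this gives $\Phi_{t+1}-\Phi_t \le -\frac12\|x^{t+1}-x^t\|^2_{\cR}\le 0$. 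Hence $\Phi_t\le\Phi_1$ for all $t\ge 1$, a uniform upper bound requiring no assumption on cluster points.

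Next I would produce a matching lower bound. Completing the square in $z^t$ leaves $h$ and $P$ intact, namely $L_\beta(x^t,y^t,z^t) = h(x^t) + P(y^t) + \frac\beta2\|\M x^t - y^t - \frac1\beta z^t\|^2 - \frac{1}{2\beta}\|z^t\|^2$. Using $\M\M^*\succeq\sigma\I$ (so $\|z^t\|^2\le\frac1\sigma\|\M^*z^t\|^2$), the identity $\M^*z^t=\nabla h(x^t)+\nabla\phi(x^t)-\nabla\phi(x^{t-1})$ from \eqref{Mzhrelation}, the split $\|a+b\|^2\le\frac1\gamma\|a\|^2+\frac1{1-\gamma}\|b\|^2$, and \eqref{normbd}, I bound $\frac{1}{2\beta}\|z^t\|^2 \le \frac{1}{2\sigma\beta\gamma}\|\nabla h(x^t)\|^2 + \frac{1}{2\sigma\beta(1-\gamma)}\|x^t-x^{t-1}\|^2_{\T_1^2}$. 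The proximal term in $\Phi_t$ is chosen precisely so that it doubles and flips the sign of the $\T_1^2$-contribution, yielding
\[
\Phi_t \ge h(x^t) - \frac{1}{2\sigma\beta\gamma}\|\nabla h(x^t)\|^2 + P(y^t) + \frac{\beta}{2}\left\|\M x^t - y^t - \frac{1}{\beta}z^t\right\|^2 + \frac{1}{2\sigma\beta(1-\gamma)}\|x^t-x^{t-1}\|^2_{\T_1^2},
\]
where the last two terms are nonnegative. The strict inequality $\zeta<2\beta\gamma$ in \eqref{hlowerbound2} enters here: it gives $\frac{1}{2\sigma\beta\gamma}<\frac{1}{\sigma\zeta}$, so $h(x^t)-\frac{1}{2\sigma\beta\gamma}\|\nabla h(x^t)\|^2 \ge h_0 + c\|\nabla h(x^t)\|^2$ with $c:=\frac{1}{\sigma\zeta}-\frac{1}{2\sigma\beta\gamma}>0$.

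Then the two cases are treated separately. For (ii), combining $\Phi_t\le\Phi_1$ with the lower bound and $\inf_y P=:p_0>-\infty$ gives $h_0 + c\|\nabla h(x^t)\|^2 + p_0\le\Phi_1$, so $\{\nabla h(x^t)\}$ is bounded; then $h(x^t)\le\Phi_1-p_0+\frac{1}{2\sigma\beta\gamma}\|\nabla h(x^t)\|^2$ is bounded above, and coercivity of $h$ bounds $\{x^t\}$. Boundedness of $\{z^t\}$ follows from $\|z^t\|^2\le\frac1\sigma\|\M^*z^t\|^2$ together with the Lipschitzness of $\nabla h$ and $\nabla\phi$, and finally $y^t=\M x^t-\frac1\beta(z^{t-1}-z^t)$ bounds $\{y^t\}$. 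For (i), dropping the nonnegative terms gives $P(y^t)\le\Phi_1-h_0$, so coercivity of $P$ bounds $\{y^t\}$; as $P(y^t)$ is then bounded below, the square term is bounded, and since $\M x^t-y^t-\frac1\beta z^t=\frac1\beta(z^{t-1}-2z^t)$ this reads $\|z^{t-1}-2z^t\|\le K$ for a constant $K$. Writing $z^t=\frac12 z^{t-1}+\frac12 w^t$ with $\|w^t\|\le K$ exhibits a contraction giving $\|z^t\|\le\max(\|z^0\|,K)$, and then $x^t=\M^{-1}(y^t+\frac1\beta(z^{t-1}-z^t))$ bounds $\{x^t\}$.

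I expect the crux to be the design of $\Phi_t$: the proximal correction must simultaneously render $\Phi_t$ monotone (cancelling against $-\cR$) and reverse the sign of the otherwise harmful $\frac{1}{2\sigma\beta(1-\gamma)}\|x^t-x^{t-1}\|^2_{\T_1^2}$ term in the lower bound, so that both bounds hold with the same auxiliary function. The second nonroutine point is the use of the strict gap $\zeta<2\beta\gamma$ to turn the subtracted $\|\nabla h\|^2$ into a nonnegative reserve that bounds the gradients in case (ii), and the contraction argument that bounds $\{z^t\}$ in case (i).
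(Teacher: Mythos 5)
Your proposal is correct and follows essentially the same route as the paper's own proof: the identical Lyapunov function $\Phi_t = L_\beta(x^t,y^t,z^t) + \frac12\|x^t-x^{t-1}\|^2_{\frac{2}{\sigma\beta(1-\gamma)}\T_1^2}$, the same monotonicity argument from \eqref{eq:smallrelation}, and the same lower bound obtained by completing the square in $z^t$ and invoking \eqref{Mzhrelation} together with \eqref{hlowerbound2}. The only cosmetic deviations are that the paper splits $h = \mu h + (1-\mu)h$ so as to retain a coercive $\mu h(x^t)$ term in a single pass (where you instead make a second pass after first bounding $\|\nabla h(x^t)\|$), and in case (i) the paper bounds $\{z^t\}$ directly from \eqref{zbound2} rather than via your contraction argument on $\|z^{t-1}-2z^t\|$; both variations are valid.
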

\begin{proof}
  First, observe from \eqref{eq:smallrelation} that
  \begin{equation*}
  \begin{split}
  \left(L_\beta(x^{t+1},y^{t+1},z^{t+1}) +\frac12\|x^{t+1} - x^t\|_{\frac{2}{\sigma\beta(1-\gamma)}\T_1^2}^2\right)& - \left(L_\beta(x^t,y^t,z^t) +\frac12\|x^{t} - x^{t-1}\|_{\frac{2}{\sigma\beta(1- \gamma)}\T_1^2}^2\right) \\
  &\le\frac12\|x^{t+1}-x^t\|_{\frac{2}{\sigma\beta}\H_\gamma-\delta \I -\T_2}^2\le 0,
  \end{split}
  \end{equation*}
  where the last inequality follows from point 3 in (ii) of Assumption~\ref{assumption}. In particular, the sequence $\{L_\beta(x^t,y^t,z^t) +\frac12\|x^{t} - x^{t-1}\|_{\frac{2}{\sigma\beta(1- \gamma)}\T_1^2}^2\}$ is decreasing and consequently, we have, for all $t\ge 1$, that
  \begin{equation}\label{eq:smallrelation4}
    L_\beta(x^t,y^t,z^t) +\frac12\|x^{t} - x^{t-1}\|_{\frac{2}{\sigma\beta(1- \gamma)}\T_1^2}^2\le L_\beta(x^1,y^1,z^1) +\frac12\|x^1 - x^0\|_{\frac{2}{\sigma\beta(1- \gamma)}\T_1^2}^2.
  \end{equation}
  Next, recall from \eqref{Mzhrelation} that
  \begin{equation}\label{zbound2}
  \begin{split}
    \sigma\|z^t\|^2 & \le \|\M^* z^t\|^2 = \|\nabla h(x^t) + \nabla \phi(x^t) - \nabla \phi(x^{t-1})\|^2\\
    & \le \frac1\gamma\|\nabla h(x^t)\|^2 + \frac1{1- \gamma}\|\nabla \phi(x^t) - \nabla \phi(x^{t-1})\|^2\\
    & \le \frac1\gamma\|\nabla h(x^t)\|^2 + \frac1{1- \gamma}\|x^t - x^{t-1}\|_{\T_1^2}^2.
  \end{split}
  \end{equation}
  Plugging this into \eqref{eq:smallrelation4}, we see further that
  \begin{equation}\label{last_rel2}
    \begin{split}
      & L_\beta(x^1,y^1,z^1) +\frac12\|x^1 - x^0\|_{\frac{2}{\sigma\beta(1- \gamma)}\T_1^2}^2 \ge L_\beta(x^t,y^t,z^t) +\frac12\|x^{t} - x^{t-1}\|_{\frac{2}{\sigma\beta(1- \gamma)}\T_1^2}^2\\
      & = h(x^t) + P(y^t) + \frac\beta2 \left\|\M x^t - y^t - \frac{z^t}{\beta}\right\|^2 - \frac{1}{2\beta}\|z^t\|^2  +\frac12\|x^{t} - x^{t-1}\|_{\frac{2}{\sigma\beta(1- \gamma)}\T_1^2}^2\\
      & \ge h(x^t) + P(y^t) + \frac\beta2 \left\|\M x^t - y^t - \frac{z^t}{\beta}\right\|^2 - \frac{1}{2\sigma\beta\gamma}\|\nabla h(x^t)\|^2 + \frac12\|x^{t} - x^{t-1}\|_{\frac{1}{\sigma\beta(1-\gamma)}\T_1^2}^2 \\
      & = \mu h(x^t) + (1-\mu)h(x^t) + P(y^t) + \frac\beta2 \left\|\M x^t - y^t - \frac{z^t}{\beta}\right\|^2 - \frac{1}{2\sigma\beta\gamma}\|\nabla h(x^t)\|^2  +\frac12\|x^{t} - x^{t-1}\|_{\frac{1}{\sigma\beta(1- \gamma)}\T_1^2}^2 \\
      & \ge \mu h(x^t) + (1-\mu)h_0 + \frac{c}\sigma \|\nabla h(x^t)\|^2 + P(y^t) + \frac\beta2 \left\|\M x^t - y^t - \frac{z^t}{\beta}\right\|^2
      +\frac12\|x^{t} - x^{t-1}\|_{\frac{1}{\sigma\beta(1-\gamma)}\T_1^2}^2,
    \end{split}
  \end{equation}
  where $c:= \frac{1-\mu}{\zeta} - \frac1{2\beta\gamma}$, and $\mu \in (0,1)$ is chosen so that $(1-\mu)\beta > \zeta/(2\gamma)$, i.e., $c > 0$.

  Now, suppose that the conditions in {\rm (i)} hold. Note that $\liminf_{\|y\|\rightarrow \infty}P(y)=\infty$ implies $\inf_y P(y) > -\infty$. This together with \eqref{last_rel2} and $(1-\mu)\beta > \zeta/(2\gamma)$ implies that $\{y^t\}$, $\{\nabla h(x^t)\}$, and $\{\|x^t - x^{t-1}\|_{\T_1^2}\}$ are bounded. Boundedness of $\{z^t\}$ follows from these and \eqref{zbound2}. Moreover, the boundedness of $\{x^t\}$ follows from the boundedness of $\{y^t\}$, $\{z^t\}$, the invertibility of $\M$ and the third relation in \eqref{scheme}.
  Next, consider the conditions in {\rm (ii)}. Since $P$ is bounded below, \eqref{last_rel2} and the coerciveness of $h(x)$ give the boundedness of $\{x^t\}$. The boundedness of $\{z^t\}$ follows from this and \eqref{zbound2}. Finally, the boundedness of $\{y^t\}$ follows from these and the third relation in \eqref{scheme}. This completes the proof.
\end{proof}

Notice that in order to guarantee boundedness of the sequence generated from the proximal ADMM, we have to choose $\beta$ to satisfy both Assumption~\ref{assumption} and \eqref{hlowerbound2}. We illustrate the conditions in Theorem~\ref{thm:boundedness2} in the next few examples. In particular, we shall see that such a choice of $\beta$ does exist in the following examples.

\begin{example}\label{examplenew:3}
  Consider the problem in Example~\ref{example:3}, and suppose in addition that $h(x) = \frac12\|\A x - b\|^2$ for some linear map $\A$ and vector $b$, and that $P$ is coercive, i.e., $\liminf_{\|y\|\rightarrow \infty}P(y)=\infty$. This includes the model of $\ell_\frac12$ regularization considered in \cite{ZengLinWangXu14}. Since $h(x) = \frac12 \|\A x - b\|^2$, we have
  \begin{equation}\label{ex:hbound}
  h(x) - \frac{1}{2\sqrt{2}L}\|\nabla h(x)\|^2 = \frac12 \|\A x - b\|^2 - \frac{1}{2\sqrt{2}L}\|\A^*(\A x - b)\|^2\ge \frac12\left(1 - \frac{1}{\sqrt{2}}\right)\|\A x - b\|^2 \ge 0.
  \end{equation}
  where $L = \lambda_{\max}(\A^*\A)$.
  Thus, \eqref{hlowerbound2} holds with $\sigma = 1$ and $\zeta = 2\sqrt{2}L < 5L < 2\beta\gamma$, where $\gamma = \frac12$. Hence, the sequence generated from the proximal ADMM is bounded, according to Theorem~\ref{thm:boundedness2} {\rm (i)}.
\end{example}

\begin{example}\label{examplenew:1}
  Consider the problem in Example~\ref{example:4}, and suppose in addition that $P$ is coercive, i.e., $\liminf_{\|y\|\rightarrow \infty}P(y)=\infty$. This covers the model of $\ell_\frac12$ regularization considered in \cite{ZengLinWangXu14}. We show that $\{(x^t,y^t,z^t)\}$ is bounded by verifying the conditions in Theorem~\ref{thm:boundedness2}.
  Indeed, we have from \eqref{ex:hbound} that \eqref{hlowerbound2} holds with $\sigma = 1$ and $\zeta = 2\sqrt{2}L < 2\beta\gamma$; recall that $L = \lambda_{\max}(\A^*\A)$ and $\gamma$ can be chosen from $(\frac{\sqrt{2}L}{\beta},1)$ in this example. The conclusion now follows from Theorem~\ref{thm:boundedness2} {\rm (i)}.
\end{example}

\begin{example}\label{examplenew:2}
  Consider the problem in Example~\ref{example:5}, and assume in addition that $\inf_y P(y) > -\infty$. We show that $\{(x^t,y^t,z^t)\}$ is bounded by showing that \eqref{hlowerbound2} holds for our choice of $\beta$. The conclusion will then follow from Theorem~\ref{thm:boundedness2} {\rm (ii)}.

  To this end, note that $h(x) = \frac12 \|x - \hat x\|^2$ and thus
  \[
  h(x) - \frac{1}{4}\|\nabla h(x)\|^2 = \frac14 \|x - \hat x\|^2 \ge 0.
  \]
  Thus, \eqref{hlowerbound2} holds with $\zeta = 4/\sigma < 2\beta\gamma$; recall that $\gamma$ can be chosen from $(\frac{2}{\sigma\beta},1)$ in this example.
\end{example}

\begin{remark}
  We further comment on the condition \eqref{hlowerbound2}. In particular, we shall argue that for a fairly large class of twice continuously differentiable function $h$ with a bounded Hessian, there exists $\nu > 0$ so that
  \[
  \inf_x\left\{h(x) - \frac{1}{2\nu}\|\nabla h(x)\|^2\right\} > -\infty.
  \]
  Actually, let $h$ be a twice continuously differentiable function with a bounded Hessian and $\inf\limits_{x}h(x) =:\alpha > -\infty$. Then it is well known that
  \[
  \inf_{x}\left\{h(x) - \frac{1}{2L}\|\nabla h(x)\|^2\right\} > -\infty,
  \]
  where $L$ is a Lipschitz continuity modulus of $\nabla h(x)$. We include a simple proof for the convenience of the readers. Indeed,
  \begin{equation*}
    \begin{split}
       \alpha \le h\left(x - \frac1L \nabla h(x)\right) &\le h(x) + \left\langle\nabla h(x), \left(x - \frac1L \nabla h(x)\right) - x\right\rangle + \frac{L}2\left\|\left(x - \frac1L \nabla h(x)\right) - x\right\|^2\\
      & = h(x) - \frac{1}{2L}\|\nabla h(x)\|^2,
    \end{split}
  \end{equation*}
  where the first inequality follows from the fact that $h$ is bounded from below by $\alpha$, and the second inequality follows from the fact that the gradient is Lipschitz continuous with modulus $L$. Consequently, for a twice continuously differentiable function $h$ with a bounded Hessian, the condition \eqref{hlowerbound2} holds for some $\sigma\zeta > 0$ if and only if $h$ is bounded below.
\end{remark}

We now study convergence of the whole sequence generated by the ADMM (i.e., proximal ADMM with $\phi = 0$)
when the objective function is semi-algebraic. The proof of this theorem
relies heavily on the KL property. For recent applications of KL property to convergence analysis of a broad class of optimization methods, see \cite{AtBoSv13}. We would like to point out that our analysis is adapted from \cite{AtBoSv13}, and we cannot directly apply the results there since some of their assumptions are not satisfied in our settings. We will further comment on this in Remark~\ref{rem4}.

\begin{theorem}{\bf (Global convergence for the whole sequence)}\label{th:2}
  Suppose that Assumption~\ref{assumption} holds with $\T_1 = 0$ (and hence $\phi = 0$), and that $h$ and $P$ are semi-algebraic functions. Suppose further that the sequence $\{(x^t,y^t,z^t)\}$ generated from the ADMM has a cluster point $(x^*,y^*,z^*)$. Then the sequence $\{(x^t,y^t,z^t)\}$ converges to $(x^*,y^*,z^*)$ and $x^*$ is a stationary point of \eqref{P1}. Moreover,
  \begin{equation}\label{eq:sumx}
    \sum_{t=1}^\infty\|x^{t+1} - x^t\| < \infty.
  \end{equation}
\end{theorem}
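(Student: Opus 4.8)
The plan is to invoke the standard KL-based convergence machinery (in the spirit of \cite{AtBoSv13}) applied to the augmented Lagrangian $L_\beta$ viewed as a function of the concatenated variable $w^t := (x^t, y^t, z^t)$. The three ingredients one needs are: a \emph{sufficient decrease} property of a suitable Lyapunov function along the iterates, a \emph{subgradient bound} controlling $\dist(0, \partial L_\beta(w^{t+1}))$ by the successive changes of the iterates, and the \emph{KL property} of the limiting function at the cluster point. Since $h$ and $P$ are semi-algebraic, $L_\beta$ is semi-algebraic (sums, compositions with the linear map $\M$, and quadratics preserve semi-algebraicity), hence a KL function by the remark following Definition~\ref{def:KL}. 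The hypothesis $\T_1 = 0$ (so $\phi = 0$) is crucial here: it removes the proximal term and makes $L_\beta$ itself the natural Lyapunov function, rather than the augmented functional $L_\beta + \frac12\|x^t - x^{t-1}\|^2_{(\cdot)\T_1^2}$ that appeared in Theorem~\ref{thm:boundedness2}.

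First I would record the sufficient decrease. With $\T_1 = 0$, estimate \eqref{eq:smallrelation} collapses to
\[
L_\beta(x^{t+1},y^{t+1},z^{t+1}) - L_\beta(x^t,y^t,z^t) \le -\tfrac12\|x^{t+1}-x^t\|^2_{\cR},
\]
where $\cR = \delta\I + \T_2 - \frac{2}{\sigma\beta}\H_\gamma \succ 0$ by point~3 in (ii) of Assumption~\ref{assumption}. Thus $\{L_\beta(w^t)\}$ is nonincreasing, and since \eqref{eq2:limit} already gives $\|x^{t+1}-x^t\|\to 0$ together with $\|y^{t+1}-y^t\|\to0$ and $\|z^{t+1}-z^t\|\to0$, a subsequence argument along \eqref{subcon} shows $L_\beta(w^t)$ converges to the common value $L_* := L_\beta(w^*)$. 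Next I would establish the subgradient bound: from \eqref{eq1}, \eqref{Mzhrelation} and the $z$-update one expresses an element of $\partial L_\beta(w^{t+1})$ (with respect to $x$, $y$, and $z$ jointly) in terms of $\nabla h(x^{t+1}) - \nabla h(x^t)$, $\M(x^{t+1}-x^t)$, and $z^{t+1}-z^t$. Using the bounded-Hessian bound \eqref{normbd} for the $h$ term and the already-proven estimate \eqref{eq6} bounding $\|z^{t+1}-z^t\|$ by $\|x^{t+1}-x^t\|$ and $\|x^t - x^{t-1}\|$, one obtains a constant $b>0$ with
\[
\dist(0,\partial L_\beta(w^{t+1})) \le b\left(\|x^{t+1}-x^t\| + \|x^t - x^{t-1}\|\right).
\]

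With these two inequalities in hand, the argument is the now-standard telescoping estimate. Applying the KL inequality of Definition~\ref{def:KL} at $w^*$ to the function $L_\beta$ (after verifying that the iterates eventually enter the neighborhood $V$, using boundedness of $\{w^t\}$ from the existence of a cluster point together with \eqref{eq2:limit} to ensure the whole sequence clusters only at $w^*$ on the relevant subsequence) gives $\varphi'(L_\beta(w^t)-L_*)\,\dist(0,\partial L_\beta(w^t)) \ge 1$. Combining this with the sufficient decrease, the concavity of $\varphi$, and the subgradient bound in the customary way yields a bound of the form
\[
2\|x^{t+1}-x^t\| \le \|x^t - x^{t-1}\| + C\left[\varphi(L_\beta(w^t)-L_*) - \varphi(L_\beta(w^{t+1})-L_*)\right],
\]
and summing telescopes to give $\sum_t \|x^{t+1}-x^t\| < \infty$, which is exactly \eqref{eq:sumx}. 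Finiteness of this sum makes $\{x^t\}$ Cauchy, hence convergent; boundedness of the primal differences then forces $\{z^t\}$ to converge via \eqref{eq6}, and $\{y^t\}$ via the relation $y^{t+1} = \M x^{t+1} + \frac1\beta(z^{t+1}-z^t)$. Since the whole sequence converges and $(x^*,y^*,z^*)$ is a cluster point, the limit is $(x^*,y^*,z^*)$, and $x^*$ is stationary by Theorem~\ref{thm:main}(i).

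The main obstacle I anticipate is the \emph{subgradient bound for the full Lyapunov variable}, specifically handling the $y$-component of $\partial L_\beta$. The subproblem defining $y^{t+1}$ involves the possibly nonconvex $P$, so one must carefully use the limiting-subdifferential optimality condition \eqref{eq1} (the inclusion $0 \in \partial P(y^{t+1}) + \cdots$) rather than a gradient identity, and then reconstruct a genuine element of $\partial_y L_\beta(w^{t+1})$ from it; because the $z$-iterate at the $y$-step lags by one index, the reconstruction naturally produces the $\|x^t-x^{t-1}\|$ term, which is why the two-step telescoping (rather than one-step) is needed. A secondary technical point is confirming that the limiting value $L_*$ equals $h(x^*)+P(\M x^*)$ via \eqref{eq:optcon} and \eqref{eq:Plimit}, so that the KL function is anchored at the correct critical value; this uses the convergence $P(y^{t_i+1}) \to P(y^*)$ already proved in Theorem~\ref{thm:main}(i).
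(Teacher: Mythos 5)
Your route is the same as the paper's (sufficient decrease of $L_\beta$, a subgradient bound, KL inequality, telescoping), but there is a genuine gap at the step you dispose of parenthetically: keeping the iterates inside the KL neighborhood $V$. Writing $w^t:=(x^t,y^t,z^t)$, note first that existence of a cluster point does \emph{not} imply boundedness of $\{w^t\}$ (the theorem assumes only a cluster point), so ``boundedness of $\{w^t\}$ from the existence of a cluster point'' is a false inference; and even for a bounded sequence, \eqref{eq2:limit} does not ensure a unique cluster point (the cluster set is only connected), so ``the whole sequence clusters only at $w^*$'' is unjustified. More substantively, the ``customary'' trapping induction of \cite{AtBoSv13} cannot be applied off the shelf here, precisely because the decrease estimate \eqref{eq:smallrelation2} controls only $\|x^{t+1}-x^t\|^2$ and not the $y$- and $z$-increments --- this is exactly the obstruction recorded in Remark~\ref{rem4}(2). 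The induction that confines iterates to $V$ must therefore be run in the $x$-variable alone, in a ball $B_\rho$ around $x^*$, with $x$-proximity converted separately into proximity of the whole triple. The paper does this using $\phi=0$ and surjectivity: $\sigma\|z^t-z^*\|^2\le\|\nabla h(x^t)-\nabla h(x^*)\|^2\le\lambda_{\max}(\Q_3)\|x^t-x^*\|^2$, and $\|y^t-y^*\|\le\|\M\|\,\|x^t-x^*\|+\frac{1}{\beta}\|z^t-z^{t-1}\|$, the last term being small for large $t$ by \eqref{eq2:limit}. It then chooses $N$ with $x^N\in B_\rho$, $l^*<L_\beta(w^N)<l^*+\eta$, and $\|x^N-x^*\|+2\sqrt{(L_\beta(w^N)-l^*)/D}+\frac{C}{D}\varphi(L_\beta(w^N)-l^*)<\rho$, and proves by induction that $x^t\in B_\rho$ for all $t\ge N$, using the partially telescoped KL estimate itself to keep the iterates trapped. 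Without this construction the KL inequality cannot be invoked beyond the convergent subsequence, and the telescoping sum that yields \eqref{eq:sumx} never gets started. A related omission: $\varphi'$ is only defined on $(0,\eta)$, so the case $L_\beta(w^t)=l^*$ for some finite $t$ must be treated separately; the paper shows this forces finite termination.

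Two smaller points. Your subgradient bound need not carry the lag term $\|x^t-x^{t-1}\|$: with $\T_1=0$ (hence $\phi=0$), \eqref{eq6} gives $\sigma\|z^{t+1}-z^t\|^2\le\frac{1}{\gamma}\|x^{t+1}-x^t\|_{\Q_3}^2$, and the element of $\partial_y L_\beta(w^{t+1})$ obtained from \eqref{eq1} is $z^{t+1}-z^t-\beta\M(x^{t+1}-x^t)$, so the paper gets the one-term bound \eqref{eq:disL}; the lag term you anticipate stems from $\phi\neq 0$, not from the $y$-step using $z^t$, and your two-step telescoping, while workable, is unnecessary. Finally, your claim that $L_\beta(w^t)\to L_\beta(w^*)$ does require the minimizer property $L_\beta(x^t,y^{t+1},z^t)\le L_\beta(x^t,y^*,z^t)$ to get the limsup bound (lower semicontinuity alone gives only the liminf side); you acknowledge this at the end, but it belongs in the main argument, before the KL inequality is anchored at $l^*$.
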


\begin{proof}
  The conclusion that $x^*$ is a stationary point of \eqref{P1} follows from Theorem~\ref{thm:main}. Moreover, \eqref{eq2:limit} holds.
  We now establish convergence.

  First, consider the subdifferential of $L_\beta$ at $(x^{t+1},y^{t+1},z^{t+1})$. Specifically, we have
  \begin{equation*}
  \begin{split}
    \nabla_x L_\beta(x^{t+1},y^{t+1},z^{t+1}) & = \nabla h(x^{t+1}) - \M^* z^{t+1} + \beta\M^*(\M x^{t+1} - y^{t+1}) \\
    & = \beta\M^*(\M x^{t+1} - y^{t+1}) = -\M^*(z^{t+1} - z^t),
  \end{split}
  \end{equation*}
  where the last two equalities follow from the second and third relations in \eqref{eq:iterop}. Similarly,
  \begin{equation*}
    \begin{split}
    \nabla_z L_\beta(x^{t+1},y^{t+1},z^{t+1}) &= - (\M x^{t+1} - y^{t+1}) = \frac{1}{\beta}(z^{t+1} - z^t).\\
    \partial_y L_\beta(x^{t+1},y^{t+1},z^{t+1}) & = \partial P(y^{t+1}) + z^{t+1} - \beta(\M x^{t+1} - y^{t+1})\\
    & \ni z^{t+1} - z^t - \beta \M (x^{t+1} - x^t),
  \end{split}
  \end{equation*}
  since $0\in \partial P(y^{t+1}) + z^t - \beta(\M x^t - y^{t+1})$ from \eqref{eq1}. The above relations together with the assumption that $\T_1=0$
  and \eqref{eq6} imply the existence of a constant $C > 0$ so that
  \begin{equation}\label{eq:disL}
    {\rm dist}(0,\partial L_\beta(x^{t+1},y^{t+1},z^{t+1})) \le C \|x^{t+1} - x^t\|.
  \end{equation}
  Moreover, from \eqref{eq:smallrelation} and $\T_1 = 0$ (and hence $\T_2 = 0$), we see that
  \begin{equation}\label{eq:smallrelation2}
  L_\beta(x^t,y^t,z^t) - L_\beta(x^{t+1},y^{t+1},z^{t+1})\ge -\frac12\|x^{t+1}-x^t\|_{\frac{2}{\sigma\beta\gamma}\Q_3-\delta \I}^2 \ge D \|x^{t+1}-x^t\|^2
  \end{equation}
  for some $D > 0$. In particular, $\{L_\beta(x^t,y^t,z^t)\}$ is decreasing.
  Since  $L_\beta$ is also bounded below along the subsequence in \eqref{subcon},
  we conclude that $\displaystyle \lim_{t \rightarrow \infty} L_\beta(x^t,y^t,z^t)$ exists.

  We now show that $\lim\limits_{t \rightarrow \infty} L_\beta(x^t,y^t,z^t) = l^*$; here, we write $l^*:= L_\beta(x^*,y^*,z^*)$ for notational simplicity. To this end, notice from the definition of $y^{t+1}$ as a minimizer that
  \[
  L_\beta(x^t,y^{t+1},z^t)\le L_\beta(x^t,y^*,z^t).
  \]
  Using this relation, \eqref{eq2:limit} and the continuity of $L_\beta$ with respect to the $x$ and $z$ variables, we have
  \begin{equation}\label{eq:upper}
    \limsup_{j\rightarrow \infty}L_\beta(x^{t_j+1},y^{t_j+1},z^{t_j+1}) \le L_\beta(x^*,y^*,z^*),
  \end{equation}
  where $\{(x^{t_j},y^{t_j},z^{t_j})\}$ is a subsequence that converges to $(x^*,y^*,z^*)$. On the other hand, from \eqref{eq2:limit}, we see that $\{(x^{t_j+1},y^{t_j+1},z^{t_j+1})\}$ also converges to $(x^*,y^*,z^*)$. This together with the lower semicontinuity of $L_\beta$ imply
  \begin{equation}\label{eq:lower}
    \liminf_{j\rightarrow \infty}L_\beta(x^{t_j+1},y^{t_j+1},z^{t_j+1}) \ge L_\beta(x^*,y^*,z^*).
  \end{equation}
  Combining \eqref{eq:upper}, \eqref{eq:lower} and the existence of $\lim L_\beta(x^t,y^t,z^t)$, we conclude that
  \begin{equation}\label{eq:limitL2}
    \lim_{t\rightarrow \infty} L_\beta(x^t,y^t,z^t) = l^*,
  \end{equation}
  as claimed. Furthermore, if $L_\beta(x^t,y^t,z^t) = l^*$ for some $t\ge 1$, since the sequence is decreasing, we must have $L_\beta(x^t,y^t,z^t) = L_\beta(x^{t+k},y^{t+k},z^{t+k})$ for all $k\ge 0$. From \eqref{eq:smallrelation2}, we see that $x^t = x^{t+k}$ and hence $z^t = z^{t+k}$ from the fact that $\T_1 = 0$ and \eqref{eq6}, for all $k\ge 0$. Consequently, we conclude from \eqref{eq7} that $y^{t+1} = y^{t+k}$ for all $k\ge 1$, meaning that the algorithm terminates finitely. Since the conclusion of this theorem holds trivially if the algorithm terminates finitely, from now on, we only consider the case where $L_\beta(x^t,y^t,z^t) > l^*$ for all $t\ge 1$.

  Next, notice that the function $(x,y,z) \mapsto L_\beta(x,y,z)$ is semi-algebraic
  due to the semi-algebraicity of $h$ and $P$. Thus, it is a KL function from \cite[Section~4.3]{AtBoReSo10}. From the property of KL functions, there exist $\eta > 0$, a neighborhood $V$ of $(x^*,y^*,z^*)$ and a continuous concave function $\varphi:[0,\eta)\rightarrow {\mathbb{R}}_+$ as described in Definition~\ref{def:KL} so that
  for all $(x,y,z)\in V$ satisfying $l^*< L_\beta(x,y,z) < l^* + \eta$, we have
    \begin{equation}\label{eq:KL_ineq2}
      \varphi'(L_\beta(x,y,z) - l^*)\,{\rm dist}(0,\partial L_\beta(x,y,z))\ge 1.
    \end{equation}

  Pick $\rho > 0$ so that
  \[
  {\bf B}_\rho:=\left\{(x,y,z):\; \|x - x^*\|<\rho, \|y - y^*\| < (\|\M\| + 1)\rho, \|z-z^*\| < \sqrt{\frac{\lambda_{\max}(\Q_3)}{\sigma}}\rho\right\}\subseteq V
  \]
  and set $B_\rho:= \{x:\; \|x - x^*\| < \rho\}$.
  From the second relation in \eqref{eq:iterop} and \eqref{eq:optcon}, we obtain for any $t\ge 1$ that
  \begin{equation*}
    \sigma\|z^t - z^*\|^2 \le \|\M^*(z^t - z^*)\|^2 = \|\nabla h(x^t) - \nabla h(x^*)\|^2 \le \lambda_{\max}(\Q_3)\|x^t - x^*\|^2.
  \end{equation*}
  Hence $\|z^t - z^*\|< \sqrt{\frac{\lambda_{\max}(\Q_3)}{\sigma}}\rho$ whenever $x^t\in B_\rho$ and $t\ge 1$.
  Moreover, from the definition of $z^{t+1}$ and \eqref{eq:optcon}, we see that whenever $t\ge 1$,
  \begin{equation*}
    \|y^t - y^*\| = \left\| \M (x^t - x^*) + \frac{1}{\beta}(z^t - z^{t-1})\right\| \le \|\M\|\|x^t - x^*\| + \frac{1}{\beta}\|z^t - z^{t-1}\|.
  \end{equation*}
  Since there exists $N_0\ge 1$ so that for all $t\ge N_0$, we have $\|z^t - z^{t-1}\| < \beta\rho$ (such an $N_0$ exists due to \eqref{eq2:limit}), it follows that
  $\|y^t - y^*\| < (\|\M\| + 1)\rho$ whenever $x^t\in B_\rho$ and $t\ge N_0$. Thus, if $x^t\in B_\rho$ and $t\ge N_0$, we have $(x^t,y^t,z^t)\in {\bf B}_\rho\subseteq V$.
  Moreover, it is not hard to see that there exists $(x^N,y^N,z^N)$ with $N \ge N_0$ such that
  \begin{enumerate}[{\rm (i)}]
    \item $x^N\in B_\rho$;
    \item $l^*< L_\beta(x^N,y^N,z^N) < l^* + \eta$;
    \item $
  \|x^N - x^*\| + 2 \sqrt{\frac{L_\beta(x^N,y^N,z^N) - l^*}{D}} + \frac{C}{D}\varphi(L_\beta(x^N,y^N,z^N) - l^*) < \rho$.
  \end{enumerate}
  Indeed, these properties follow from the fact that $(x^*,y^*,z^*)$ is a cluster point, \eqref{eq:limitL2} and that $L_\beta(x^t,y^t,z^t)>l^*$ for all $t\ge 1$.

  We next show that, if $x^t\in B_\rho$ and $l^* < L_\beta(x^t,y^t,z^t) < l^* + \eta$ for some fixed $t\ge N_0$, then
  \begin{equation}\label{eq:toprove}
  \begin{split}
    &\|x^{t+1} - x^t\| + (\|x^{t+1} - x^t\| - \|x^{t} - x^{t-1}\|)\\
    &\le \frac{C}{D}[\varphi(L_\beta(x^t,y^t,z^t) - l^*) - \varphi(L_\beta(x^{t+1},y^{t+1},z^{t+1}) - l^*)].
  \end{split}
  \end{equation}
  To see this, notice that $x^t\in B_\rho$ and $t\ge N_0$ implies $(x^t,y^t,z^t)\in {\bf B}_\rho\subseteq V$. Hence, \eqref{eq:KL_ineq2} holds for $(x^t,y^t,z^t)$. Combining \eqref{eq:disL}, \eqref{eq:smallrelation2}, \eqref{eq:KL_ineq2} and the concavity of $\phi$, we conclude that for all such $t$
  \begin{equation*}
    \begin{split}
      & C \|x^{t} - x^{t-1}\|\cdot[\varphi(L_\beta(x^t,y^t,z^t) - l^*) - \varphi(L_\beta(x^{t+1},y^{t+1},z^{t+1}) - l^*)]\\
      &\ge  {\rm dist}(0,\partial L_\beta(x^t,y^t,z^t))\cdot[\varphi(L_\beta(x^t,y^t,z^t) - l^*) - \varphi(L_\beta(x^{t+1},y^{t+1},z^{t+1}) - l^*)]\\
      &\ge  {\rm dist}(0,\partial L_\beta(x^t,y^t,z^t))\cdot\varphi'(L_\beta(x^t,y^t,z^t) - l^*)\cdot[L_\beta(x^t,y^t,z^t) - L_\beta(x^{t+1},y^{t+1},z^{t+1})]\\
      &\ge  D \|x^{t+1}-x^t\|^2.
    \end{split}
  \end{equation*}
  Dividing both sides by $D$, taking square root, using the inequality $2\sqrt{ab}\le a + b$ as in the proof of \cite[Lemma~2.6]{AtBoSv13}, and rearranging terms, we conclude
  that \eqref{eq:toprove} holds.

  We now show that $x^t\in B_\rho$ whenever $t\ge N$.
  We establish this claim by induction, and our proof is similar to the proof of \cite[Lemma~2.6]{AtBoSv13}.
  The claim is true for $t=N$ by construction. For $t= N+1$, we have
  \begin{equation*}
    \begin{split}
      &\|x^{N+1} - x^*\| \le \|x^{N+1}-x^N\| + \|x^N - x^*\| \\
      &\le \sqrt{\frac{L_\beta(x^N,y^N,z^N) - L_\beta(x^{N+1},y^{N+1},z^{N+1})}{D}} + \|x^N - x^*\|\\
      &\le \sqrt{\frac{L_\beta(x^N,y^N,z^N) - l^*}{D}} + \|x^N - x^*\| < \rho,
    \end{split}
  \end{equation*}
  where the first inequality follows from \eqref{eq:smallrelation2}.
  Now, suppose the claim is true for $t=N,\ldots,N+k-1$ for some $k> 1$; i.e., $x^N,\ldots,x^{N+k-1}\in B_\rho$. We now consider the case when $t = N+k$:
  \begin{equation*}
    \begin{split}
      &\|x^{N+k} - x^*\| \le \|x^N - x^*\| + \|x^N - x^{N+1}\| + \sum_{j=1}^{k-1}\|x^{N+j+1}-x^{N+j}\|\\
      & = \|x^N - x^*\| + 2\|x^N - x^{N+1}\| - \|x^{N+k}-x^{N+k-1}\|\\
      & \ \ +\sum_{j=1}^{k-1}[\|x^{N+j+1}-x^{N+j}\| + (\|x^{N+j+1}-x^{N+j}\| - \|x^{N+j}-x^{N+j-1}\|)]\\
      & \le \|x^N - x^*\| + 2\|x^N - x^{N+1}\|\\
      & \ \ + \frac{C}{D}\sum_{j=1}^{k-1}[\varphi(L_\beta(x^{N+j},y^{N+j},z^{N+j}) - l^*) - \varphi(L_\beta(x^{N+j+1},y^{N+j+1},z^{N+j+1}) - l^*)]\\
      & \le \|x^N - x^*\| + 2\|x^N - x^{N+1}\| + \frac{C}{D}\varphi(L_\beta(x^{N+1},y^{N+1},z^{N+1}) - l^*),
    \end{split}
  \end{equation*}
  where the first inequality follows from \eqref{eq:toprove}, the monotonicity of $\{L_\beta(x^t,y^t,z^t)\}$ from \eqref{eq:smallrelation2}, and the induction assumption that $x^N,\ldots,x^{N+k-1}\in B_\rho$. Moreover, in view of \eqref{eq:smallrelation2} and the definition of $\rho$, we see that the last expression above is less than $\rho$. Hence, $\|x^{N+k} - x^*\|<\rho$ as claimed, and we have shown that $x^t\in B_\rho$ for $t\ge N$ by induction.

  Since $x^t\in B_\rho$ for $t\ge N$, we can sum \eqref{eq:toprove} from $t=N$ to $M\rightarrow \infty$. Invoking \eqref{eq2:limit}, we arrive at
  \begin{equation*}
    \sum_{t=N}^\infty\|x^{t+1} - x^t\| \le  \frac{C}{D}\varphi(L_\beta(x^N,y^N,z^N) - l^*) + \|x^N - x^{N-1}\|,
  \end{equation*}
  which implies that \eqref{eq:sumx} holds. Convergence of $\{x^t\}$ follows immediately from this. Convergence of $\{y^t\}$ follows from the convergence of $\{x^t\}$, the relation $y^{t+1} = \M x^{t+1} + \frac{1}{\beta}(z^{t+1} - z^t)$ from \eqref{scheme}, and \eqref{eq2:limit}. Finally, the convergence of $\{z^t\}$ follows from the surjectivity of $\M$, and the relation $\M^*z^{t+1} = \nabla h(x^{t+1})$ from \eqref{eq:iterop}. This completes the proof.
\end{proof}
\begin{remark}{\bf (Comments on Theorem \ref{th:2})}\label{rem4}
\begin{itemize}
 \item[{\rm (1)}] A close inspection of the above proof shows that the conclusion of Theorem \ref{th:2} continues to hold as long as the augmented Lagrangian $L_\beta$ is a KL-function.
Here, we only state the case where $h$ and $P$ are semi-algebraic because this simple sufficient condition can be easily verified.
\item[{\rm (2)}]  Although a general convergence analysis framework was established in \cite{AtBoSv13} for a broad class of optimization problems, it is not clear to us whether their results can be applied directly here. Indeed, to ensure convergence, three basic properties {\bf H1}, {\bf H2} and {\bf H3} were imposed in \cite[Page~99]{AtBoSv13}.
In particular, their property {\bf H1} (sufficient descent property) in our case reads: $$L_\beta(x^t,y^t,z^t) - L_\beta(x^{t+1},y^{t+1},z^{t+1}) \ge D (\|x^{t+1}-x^t\|^2+\|y^{t+1}-y^t\|^2+\|z^{t+1}-z^t\|^2),
$$ for some $D>0$. On the other hand, (\ref{eq:smallrelation2}) in our proof only gives us that
$L_\beta(x^t,y^t,z^t) - L_\beta(x^{t+1},y^{t+1},z^{t+1}) \ge D \|x^{t+1}-x^t\|^2$, which is not sufficient
for property {\bf H1} to hold.

\item[{\rm (3)}] In Theorem \ref{th:2}, we only discussed the case where $\phi=0$. This condition is used to ensure that $\{L_\beta(x^t,y^t,z^t)\}$ is a decreasing
sequence that is at least as large as $L_\beta(x^*,y^*,z^*)$.  It would be interesting to see whether the analysis here can be further extended to the case where $\phi \neq 0$.
\end{itemize}

\end{remark}

Before ending this section, we comment on the behavior of ADMM \eqref{scheme} in the case where $\M$ is assumed to be injective (instead of surjective). As suggested by the numerical experiments in \cite{DongZhang13} and our preliminary numerical tests, it is conceivable that the ADMM does {\em not} cluster at a stationary point in general when applied to solving problem \eqref{P1} with an injective $\M$. We hereby give a concrete 2-dimensional example for non-convergence, motivated by the recent counterexample in \cite[Remark~6]{Bauschke} for the convergence of Douglas-Rachford splitting method in a nonconvex setting.\footnote{Douglas-Rachford (DR) splitting method is a popular method for nonconvex feasibility problems and can be suitably applied to solving \eqref{P1} when $\M = \I$; see \cite{LiPong14_2}. Moreover, it has been brought to our attention during the revision process of this paper that the known equivalence between the ADMM and the DR splitting method in the convex case (see, for example, \cite[Remark~3.14]{Bau_Koch}) can be passed through to the nonconvex cases. Thus, the global convergence results in this paper concerning the ADMM can be specialized to obtain global convergence of the DR splitting method in some nonconvex settings. We note that the global convergence of the DR splitting method in the nonconvex settings has been studied in \cite{LiPong14_2} based on a new specially constructed merit function.}

\begin{example}\label{ex7:nonconverge}{\bf (Divergence of ADMM \eqref{scheme} when $\M$ is injective)}
  Fix $\eta \in (0,1]$ and set $C = \{x\in \R^2:\; x_2 = 0\}$ and $D = \{(0,0),(2,\eta),(2,-\eta)\}$. Then $C\cap D \neq \emptyset$. Consider the optimization problem
  \begin{equation*}
    \begin{array}{rl}
      \min\limits_{x} & 0\\
      {\rm s.t.} & x\in C,\ x\in D.
    \end{array}
  \end{equation*}
  This problem corresponds to \eqref{P1} with $h(x) = 0$, $P(y) = \delta_C(y_1) + \delta_D(y_2)$ where $y = (y_1,y_2)$, and $\M$ is the linear map so that $\M x = (x,x)$; the problem can be equivalently reformulated as
  \begin{equation*}
    \begin{array}{rl}
      \min\limits_{x,y} & 0\\
      {\rm s.t.} & x - y_1 = 0,\\
      & x - y_2 = 0,\\
      & y_1\in C,\ y_2\in D,
    \end{array}
  \end{equation*}
  and the ADMM can be applied.
  Let $z_1$ and $z_2$ denote the multipliers corresponding to the first and second equality constraints, respectively. The iterates in \eqref{scheme} (with $\phi = 0$) now take the form
  \begin{equation}\label{counterscheme}
\left\{
\begin{split}
&y_1^{t+1} = P_C\left(x^t - \frac{z_1^t}{\beta}\right),\ y_2^{t+1} \in P_D\left(x^t - \frac{z_2^t}{\beta}\right), \\
&x^{t+1} = \frac{1}{2}\left(y_1^{t+1} + \frac{z_1^t}{\beta} + y_2^{t+1} + \frac{z_2^t}{\beta} \right),\\
&z_1^{t+1} = z_1^t - \beta  (x^{t+1}- y_1^{t+1}),\\
&z_2^{t+1} = z_2^t - \beta  (x^{t+1}- y_2^{t+1}).
\end{split}
\right.
\end{equation}
For concreteness, whenever ambiguity arises in updating $y_2^{t+1}$ via the projection onto the nonconvex (discrete) set $D$, we choose the element in $D$ that is closest to the previous iterate $y_2^t$.

For each $\beta > 0$, consider the initializations $x^0 = (2,0)$, $z_1^0 = (0,-\beta \eta)$ and $z_2^0 = (0,\beta\eta)$. Then it is routine to show that the ADMM described in \eqref{counterscheme} will exhibit a discrete limit cycle of length $8$. Specifically, $(y_1^t,y_2^t,x^t,z_1^t,z_2^t) = (y_1^{8k+t},y_2^{8k+t},x^{8k+t},z_1^{8k+t},z_2^{8k+t})$ for any $1\le t\le 8$ and $k \ge 0$. Moreover,
\begin{equation*}
  \begin{split}
    y_1^t = (2,0),\ \ &1\le t\le 8,\ \ \
    y_2^t = \begin{cases}
      (2,-\eta) & 1\le t\le 4,\\
      (2,\eta) & 5\le t\le 8,
    \end{cases}\ \ \
    x^t = \begin{cases}
      (2,-\frac\eta2) & 1\le t\le 4,\\
      (2,\frac\eta2) & 5\le t\le 8,
    \end{cases}\\
  &z_1^t = \left(0,\frac{(2 - |t - 4|)\beta\eta}2\right),\ \ 1\le t\le 8,\ \ \ z_2^t = - z_1^t.
  \end{split}
\end{equation*}
In particular, the sequence $\{x^t\}$ is not convergent and the successive change of the $z$-update does not converge to zero.
\end{example}

\section{Proximal gradient algorithm when $\M = \I$}\label{sec:MI}

In this section, we look at the model problem \eqref{P1} in the case where $\M=\I$. 
Since the objective is the sum of a smooth and a possibly nonsmooth part with a simple proximal mapping, it is natural to consider the proximal gradient algorithm (also known as the forward-backward splitting algorithm). In this approach, one considers the update
\begin{equation}\label{FBSupdate}
  x^{t+1} \in \Argmin_x\left\{\langle \nabla h(x^t),x - x^t\rangle + \frac{1}{2\beta}\|x - x^t\|^2 + P(x) \right\}.
\end{equation}
From our assumption on $P$, the update can be performed efficiently via a computation of the proximal mapping of $\beta P$.
When $\beta \in (0,\frac{1}{L})$, where $L\ge \sup\{\|\nabla^2h(x)\|:\; x\in \R^n\}$,
it is not hard to show that any cluster point $x^*$ of the sequence generated above is a stationary point
of \eqref{P1}; see, for example, \cite{BreLo09}. 
In what follows, we analyze the convergence under a slightly more flexible step-size rule.
\begin{theorem}\label{prop:prox}
  Suppose that there exists a twice continuously differentiable convex function $q$ and $\ell > 0$ such that for all $x$,
  \begin{equation}\label{h_cond}
  -\ell \I \preceq \nabla^2 h(x) + \nabla^2 q(x) \preceq \ell \I.
  \end{equation}
  Let $\{x^t\}$ be generated from \eqref{FBSupdate} with $\beta \in (0,\frac{1}{\ell})$. Then the algorithm is a descent algorithm.
  Moreover, any cluster point $x^*$ of $\{x^t\}$, if exists, is a stationary point.
\end{theorem}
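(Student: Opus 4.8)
The plan is to first show that the proximal gradient update \eqref{FBSupdate} decreases the objective $F := h + P$ by a definite amount at each step, and then to combine this descent estimate with the robustness property \eqref{outersemi} of the limiting subdifferential to identify every cluster point as stationary. The crucial device is the auxiliary convex function $q$: setting $g := h + q$, condition \eqref{h_cond} says exactly that $\nabla^2 g(x) = \nabla^2 h(x) + \nabla^2 q(x)$ has operator norm at most $\ell$ for all $x$, so $\nabla g$ is Lipschitz with modulus $\ell$ and $g$ obeys the descent lemma $g(u) \le g(v) + \langle \nabla g(v), u - v\rangle + \frac{\ell}{2}\|u-v\|^2$.

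For the descent estimate I would record two inequalities, one from each ingredient of the update. Writing the descent lemma for $g$ at $(u,v)=(x^{t+1},x^t)$ and subtracting the convexity inequality $q(x^{t+1}) \ge q(x^t) + \langle \nabla q(x^t), x^{t+1}-x^t\rangle$, the $\nabla q(x^t)$ terms cancel and I obtain an effective descent lemma for $h$ itself, but now with the (possibly smaller) modulus $\ell$,
\[
h(x^{t+1}) \le h(x^t) + \langle \nabla h(x^t), x^{t+1}-x^t\rangle + \frac{\ell}{2}\|x^{t+1}-x^t\|^2 .
\]
This is precisely the point of introducing $q$: it replaces the genuine Lipschitz modulus of $\nabla h$ by $\ell$, which is what buys the more flexible range $\beta \in (0,1/\ell)$. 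Next, since $x^{t+1}$ is a minimizer of the subproblem in \eqref{FBSupdate}, comparing its objective value against the feasible point $x=x^t$ gives $P(x^{t+1}) \le P(x^t) - \langle \nabla h(x^t), x^{t+1}-x^t\rangle - \frac{1}{2\beta}\|x^{t+1}-x^t\|^2$ (no convexity of $P$ is needed, only that $x^{t+1}$ is a global minimizer of the subproblem). Adding the two inequalities cancels the inner-product terms and yields $F(x^{t+1}) \le F(x^t) - \left(\frac{1}{2\beta} - \frac{\ell}{2}\right)\|x^{t+1}-x^t\|^2$, where $\frac{1}{2\beta} - \frac{\ell}{2} > 0$ exactly because $\beta < 1/\ell$. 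This establishes the descent claim.

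For the stationarity claim, let $x^*$ be a cluster point with $x^{t_i} \to x^*$. Since $\{F(x^t)\}$ is nonincreasing while $\liminf_i F(x^{t_i}) \ge F(x^*) > -\infty$ by lower semicontinuity of $F$ and properness of $P$, the limit $\lim_t F(x^t)$ is finite; telescoping the descent inequality then forces $\sum_t \|x^{t+1}-x^t\|^2 < \infty$, so $\|x^{t+1}-x^t\| \to 0$ and in particular $x^{t_i+1} \to x^*$. The first-order optimality condition for \eqref{FBSupdate} reads $-\nabla h(x^t) - \frac{1}{\beta}(x^{t+1}-x^t) \in \partial P(x^{t+1})$, and along the subsequence its left-hand side converges to $-\nabla h(x^*)$ by continuity of $\nabla h$ and $\|x^{t_i+1}-x^{t_i}\|\to 0$.

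The one genuinely delicate step is justifying the passage to the limit in this inclusion, because the robustness property \eqref{outersemi} requires $x^{t_i+1} \st{P}{\to} x^*$, that is, convergence of the function values $P(x^{t_i+1}) \to P(x^*)$ and not merely of the points. I would obtain this by once more comparing the minimized subproblem value at $x^{t_i+1}$ against the feasible point $x=x^*$: every smooth and quadratic term tends to $0$ as $x^{t_i},x^{t_i+1}\to x^*$, giving $\limsup_i P(x^{t_i+1}) \le P(x^*)$, while lower semicontinuity of $P$ gives the reverse inequality, so $P(x^{t_i+1}) \to P(x^*)$. With $x^{t_i+1} \st{P}{\to} x^*$ in hand, \eqref{outersemi} yields $-\nabla h(x^*) \in \partial P(x^*)$, i.e. $0 \in \nabla h(x^*) + \partial P(x^*) = \partial(h+P)(x^*)$, so $x^*$ is stationary. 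I expect this $P$-value continuity to be the main obstacle; the descent inequality, by contrast, is a short computation once the $q$-cancellation is spotted.
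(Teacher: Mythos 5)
Your proposal is correct and follows essentially the same route as the paper's proof: the descent estimate via the descent lemma for $h+q$ combined with convexity of $q$ and the subproblem optimality of $x^{t+1}$, the telescoping/lower-semicontinuity argument to get $\|x^{t+1}-x^t\|\to 0$, and the comparison of the subproblem value at $x^{t_i+1}$ against $x^*$ to obtain $P(x^{t_i+1})\to P(x^*)$ before invoking \eqref{outersemi}. The only cosmetic difference is that you cancel the $q$-terms up front to state an ``effective descent lemma'' for $h$ with modulus $\ell$, whereas the paper carries $(h+q)$ and $-q$ through the chain of inequalities and cancels at the end; the computations are identical.
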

\begin{remark}
  For the algorithm to converge faster, intuitively, a larger step-size $\beta$ should be chosen; see also Table~\ref{table3}. Condition \eqref{h_cond} indicates that the ``concave" part of the smooth objective $h$ does not impose any restrictions on the choice of step-size. This could result in an $\ell$ smaller than the Lipschitz continuity modulus of $\nabla h(x)$, and hence allow a choice of a larger $\beta$. On the other hand, since the algorithm is a descent algorithm by Theorem~\ref{prop:prox}, the sequence generated from \eqref{FBSupdate} would be bounded under standard coerciveness assumptions on the objective function.
\end{remark}
\begin{proof}
  Notice from assumption that $\nabla(h + q)$ is Lipschitz continuous with Lipschitz continuity modulus at most $\ell$.
  Hence
  \begin{equation}\label{upperbd}
  (h+q)(x^{t+1})\le (h+q)(x^t) + \langle \nabla h(x^t) + \nabla q(x^t),x^{t+1}-x^t\rangle + \frac{\ell}{2}\|x^{t+1}-x^t\|^2.
  \end{equation}
  From this we see further that
  \begin{equation}\label{eq:descent}
    \begin{split}
      & h(x^{t+1}) + P(x^{t+1})  = (h + q)(x^{t+1}) + P(x^{t+1}) - q(x^{t+1})\\
      & \le  (h+q)(x^t) + \langle \nabla h(x^t) + \nabla q(x^t),x^{t+1}-x^t\rangle + \frac{\ell}{2}\|x^{t+1}-x^t\|^2 + P(x^{t+1}) - q(x^{t+1})\\
      & = h(x^t) + \langle \nabla h(x^t),x^{t+1}-x^t\rangle + \frac{\ell}{2}\|x^{t+1}-x^t\|^2 + P(x^{t+1}) \\
      &\phantom{ = } + q(x^t) + \langle \nabla q(x^t),x^{t+1}-x^t\rangle- q(x^{t+1})\\
      &\le h(x^t) + P(x^t) + \left(\frac{\ell}{2} - \frac{1}{2\beta}\right)\|x^{t+1}-x^t\|^2,
    \end{split}
  \end{equation}
  where the first inequality follows from \eqref{upperbd}, the last inequality follows from the definition of $x^{t+1}$ and the subdifferential inequality applied to the function $q$.
  Since $\beta\in (0,\frac{1}{\ell})$ implies $\frac{1}{2\beta}>\frac{\ell}{2}$, \eqref{eq:descent} shows that the algorithm is a descent algorithm.

  Rearranging terms in \eqref{eq:descent} and summing from $t=0$ to any $N-1 > 0$, we see further that
  \begin{equation*}
  \begin{split}
    \left(\frac{1}{2\beta} - \frac{\ell}{2}\right)\sum_{t=0}^{N-1}\|x^{t+1}-x^t\|^2 &
    \le h(x^0) + P(x^0) - h(x^N) - P(x^N).
  \end{split}
  \end{equation*}
  Now, let $x^*$ be a cluster point and take any convergent subsequence $\{x^{t_i}\}$ that converges to $x^*$. Taking limit on both sides of the above inequality
  along the convergent subsequence, one can see that $\lim\limits_{t \rightarrow \infty}\|x^{t+1}-x^t\|=0$.
  Finally, we wish to show that $\displaystyle \lim_{i \rightarrow \infty} P(x^{t_i+1})= P(x^*)$. To this end, note first that since $\lim\limits_{t \rightarrow \infty}\|x^{t+1}-x^t\|=0$, we also have $\lim\limits_{i \rightarrow \infty} x^{t_i+1} = x^*$. Then it follows from lower semicontinuity of $P$ that $\displaystyle \liminf_{i \rightarrow \infty} P(x^{t_i+1}) \ge P(x^*)$. On the other hand, from \eqref{FBSupdate}, we have
  \[
  \langle \nabla h(x^{t_i}),x^{t_i+1} - x^{t_i}\rangle + \frac{1}{2\beta}\|x^{t_i+1} - x^{t_i}\|^2 + P(x^{t_i+1}) \le \langle \nabla h(x^{t_i}),x^* - x^{t_i}\rangle + \frac{1}{2\beta}\|x^* - x^{t_i}\|^2 + P(x^*),
  \]
  which gives $\displaystyle \limsup_{i \rightarrow \infty} P(x^{t_i+1}) \le P(x^*)$. Hence, $\displaystyle \lim_{i \rightarrow \infty}P(x^{t_i+1})= P(x^*)$. Now, using this, $\displaystyle \lim_{t \rightarrow \infty}\|x^{t+1}-x^t\|=0$, \eqref{outersemi}
  and taking limit along the convergent subsequence in the following relation obtained from \eqref{FBSupdate}
  \begin{equation}\label{1stcondition}
    0\in \nabla h(x^t) + \frac{1}{\beta}(x^{t+1}-x^t) + \partial P(x^{t+1}),
  \end{equation}
  we see that the conclusion concerning stationary point holds.
\end{proof}

We illustrate the above theorem in the following examples.

\begin{example}
  Suppose that $h$ admits an explicit representation as a difference of two convex twice continuously differentiable functions $h = h_1 - h_2$, and that $h_1$ has a Lipschitz continuous gradient with modulus at most $L_1$. Then \eqref{h_cond} holds with $q = h_2$ and $\ell = L_1$. Hence, the step-size can be chosen from $(0,1/L_1)$.

  A concrete example of this kind is given by $h(x) = \frac12\langle x,Q x\rangle$, where $Q$ is a symmetric indefinite matrix.
  Then \eqref{h_cond} holds with $q(x) = -\frac{1}{2}\langle x,Q_-x\rangle$, where $Q_-$ is the projection of $Q$ onto the cone of nonpositive semidefinite matrices, and $\ell = \lambda_{\max}(Q) > 0$. The step-size $\beta$ can be chosen within the open interval $(0,1/\lambda_{\max}(Q))$.

  In the case when $h(x)$ is a concave quadratic, say, for example, $h(x) = -\frac12 \|\A x - b\|^2$ for some linear map $\A$, it is easy to see that \eqref{h_cond} holds with $q(x) = \frac12 \|\A x\|^2$ for {\em any} positive number $\ell$. Thus, step-size can be chosen to be any positive number.
\end{example}

\begin{example}
  Suppose that $h$ has a Lipschitz continuous gradient and it is known that all the eigenvalues of $\nabla^2h(x)$, for any $x$, lie in the interval $[-\lambda_2,\lambda_1]$ with $-\lambda_2 < 0 < \lambda_1$. If $\lambda_1 \ge \lambda_2$, it is clear that $\nabla h$ is Lipschitz continuous with modulus bounded by $\lambda_1$, and hence the step-size for the proximal gradient algorithm can be chosen from $(0,1/\lambda_1)$. On the other hand, if $\lambda_1 < \lambda_2$, then it is easy to see that \eqref{h_cond} holds with $q(x) = \frac{\lambda_2-\lambda_1}{4}\|x\|^2$ and $\ell = (\lambda_2 + \lambda_1)/2$. Hence,
  the step-size can be chosen from $(0,2/(\lambda_1 + \lambda_2))$.
\end{example}

We next comment on the convergence of the whole sequence. We consider the conditions {\bf H1} through {\bf H3} on \cite[Page~99]{AtBoSv13}. First, it is easy to see from \eqref{eq:descent} that {\bf H1} is satisfied with $a = \frac{1}{2\beta} - \frac{\ell}{2}$. Next, notice from \eqref{1stcondition} that if $w^{t+1}:= \nabla h(x^{t+1}) - \nabla h(x^t) - \frac{1}{\beta}(x^{t+1}-x^t)$, then $w^{t+1}\in \nabla h(x^{t+1}) + \partial P(x^{t+1})$. Moreover, from the definition of $w^{t+1}$, we have
\[
\|w^{t+1}\| \le \left( L + \frac{1}{\beta}\right)\|x^{t+1} - x^t\|
\]
for any $L \ge \sup\{\|\nabla^2h(x)\|:\; x\in \R^n\}$.
This shows that the condition {\bf H2} is satisfied with $b = L + \frac{1}{\beta}$. Finally, \cite[Remark~5.2]{AtBoSv13} shows that {\bf H3} is satisfied. Thus, we conclude from \cite[Theorem~2.9]{AtBoSv13} that if $h+P$ is a KL-function and a cluster point $x^*$ of the sequence $\{x^t\}$ exists, then the whole sequence converges to $x^*$.

A line-search strategy can also be incorporated to possibly speed up the above algorithm; see \cite{GHLYZ13}
for the case when $P$ is a continuous difference-of-convex function. The convergence analysis there can be directly adapted. The result of Theorem~\ref{prop:prox} concerning the interval of viable step-sizes can be used in designing the initial step-size for backtracking in the line-search procedure.


\section{Numerical simulations}\label{sec:num}

In this section, we perform numerical experiments to illustrate our algorithms. All codes are written in MATLAB.
All experiments are performed on a 32-bit desktop machine with an Intel$\circledR$ i7-3770 CPU (3.40 GHz) and a 4.00 GB RAM, equipped with MATLAB 7.13 (2011b).

\subsection{ADMM}

\paragraph{Minimizing constraints violation.} We consider the problem of finding the closest point to a given $\hat x\in \R^n$ that violates at most $r$ out of $m$ equations. The problem is presented as follows:
\begin{equation}\label{eq:cpv}
  \begin{array}{rl}
    \min\limits_x & \frac{1}{2}\|x - \hat x\|^2\\
    {\rm s.t.} & \|\M x - b\|_0 \le r,
  \end{array}
\end{equation}
where $\M \in \R^{m\times n}$ has full row rank, $b\in \R^m$, $n \ge m \ge r$. This can be seen as a special case of \eqref{P1} by taking $h(x) = \frac{1}{2}\|x - \hat x\|^2$ and $P(y)$ to be the indicator function of the set $\{y:\; \|y - b\|_0 \le r\}$, which is a proper closed function; here, $\|y\|_0$ is the $\ell_0$ norm that counts the number of nonzero entries in the vector $y$.

We apply the ADMM (i.e., proximal ADMM with $\phi = 0$) with parameters specified as in Example~\ref{example:5}, and pick $\beta = 1.01\cdot(2/\sigma)$ so that $\beta > 2/\sigma$. From Example~\ref{examplenew:2}, the sequence generated from the ADMM is always bounded and hence convergence of the sequence is guaranteed by Theorem~\ref{th:2}. We compare our model against the standard convex model with the $\ell_0$ norm replaced by the $\ell_1$ norm. This latter model is solved by SDPT3 (Version 4.0), called via CVX (Version 1.22), using default settings.

For the ADMM, we consider two initializations: setting all variables at the origin ($0$ init.), or setting $x^0$ to be the approximate solution $\tilde x$ obtained from solving the convex model, $y^0 = \M x^0$ and $z^0 = (\M\M^*)^{-1}\M(x^0 - \hat x)$ ($\ell_1$ init.). As discussed in Remark~\ref{rem1}, when $\tilde x$ is feasible for \eqref{eq:cpv}, this latter initialization satisfies the conditions in Theorem~\ref{thm:main}(ii).
We terminate the ADMM when the sum of successive changes is small, i.e., when
\begin{equation}\label{ADMMterm}
\frac{\|x^t - x^{t-1}\| + \|y^t - y^{t-1}\| + \|z^t - z^{t-1}\|}{\|x^t\| + \|y^t\| + \|z^t\| + 1} < 10^{-8}.
\end{equation}

In our experiments, we consider random instances. In particular, to guarantee that the problem \eqref{eq:cpv} is feasible for a fixed $r$, we generate the matrix $\M$ and the right hand side $b$ using the following MATLAB codes:
\begin{verbatim}
  M = randn(m,n);
  x_orig = randn(n,1);
  J = randperm(m);
  b = randn(m,1);
  b(J(1:m-r)) = M(J(1:m-r),:)*x_orig; % subsystem has a solution
\end{verbatim}
We then generate $\hat x$ with i.i.d. standard Gaussian entries.

We consider $n=1000$, $2000$, $3000$, $4000$ and $5000$, $m = 500$, $r = 100$, $200$ and $300$. We generate one random instance for each $(n,m,r)$ and solve \eqref{eq:cpv} and the corresponding $\ell_1$ relaxation.
The computational results are shown in Table~\ref{table2}, where we report the number of violated constraints (vio) by the approximate solution $x$ obtained, defined as $\#\{i:\;|(\M x - b)_i| > 10^{-4}\}$, and the distance from $\hat x$ (dist) defined as $\|x - \hat x\|$. We also report the number of iterations the ADMM takes, as well as the CPU time of both the ADMM initialized at the origin and SDPT3 called using CVX.\footnote{We include the preprocessing time by CVX in the CPU time.} We see that the model \eqref{eq:cpv} allows an explicit control on the number of violated constraints. In addition, comparing with the $\ell_1$ model, the $\ell_0$ model solved using the ADMM always gives a solution closer to $\hat x$. Finally, the solution obtained from the ADMM initialized from an approximate solution of the $\ell_1$ model can be slightly closer to $\hat x$ than the solution obtained from the zero initialization, depending on the particular problem instance.

\begin{table}[h!]
\caption{Computational results for perturbation with bounded number of violated equalities.}\label{table2}
\footnotesize
\begin{center}
\begin{tabular}{|ccc||cccc|ccc|ccc|}
\hline
\multicolumn{3}{|c||}{}&
\multicolumn{4}{c|}{$\ell_0$-ADMM ($0$ init.)}&
\multicolumn{3}{c|}{$\ell_1$-CVX}&
\multicolumn{3}{c|}{$\ell_0$-ADMM ($\ell_1$ init.)}
\\
$r$ & $n$& $\|x_{\rm orig} - \widehat x\|$&
{\rm iter}& {\rm CPU} & {\rm vio} & {\rm dist} & {\rm CPU} & {\rm vio} & {\rm dist}&
{\rm iter} & {\rm vio} & {\rm dist}
\\ \hline
 100 & 1000 & 4.70e+001 &        389 &  0.4 &        100 & 2.24e+001 & 10.1 &         13 & 3.25e+001 &        405 &        100 & 2.18e+001     \\
 100 & 2000 & 6.37e+001 &        158 &  0.4 &        100 & 2.05e+001 & 18.4 &          6 & 2.92e+001 &        150 &        100 & 1.89e+001     \\
 100 & 3000 & 7.72e+001 &        130 &  0.7 &        100 & 1.95e+001 & 27.7 &          8 & 2.97e+001 &        108 &        100 & 1.85e+001     \\
 100 & 4000 & 8.85e+001 &        101 &  0.8 &        100 & 2.01e+001 & 37.3 &          3 & 3.12e+001 &         95 &        100 & 1.89e+001     \\
 100 & 5000 & 1.00e+002 &         94 &  1.0 &        100 & 2.05e+001 & 49.7 &          3 & 2.96e+001 &         88 &        100 & 1.85e+001     \\ \hline
 200 & 1000 & 4.30e+001 &        518 &  0.4 &        200 & 1.50e+001 & 10.7 &         16 & 2.95e+001 &        577 &        200 & 1.38e+001     \\
 200 & 2000 & 6.35e+001 &        229 &  0.6 &        200 & 1.24e+001 & 21.1 &         12 & 2.91e+001 &        224 &        200 & 1.14e+001     \\
 200 & 3000 & 7.75e+001 &        146 &  0.8 &        200 & 1.22e+001 & 27.5 &          9 & 2.85e+001 &        136 &        200 & 1.21e+001     \\
 200 & 4000 & 9.14e+001 &        112 &  0.9 &        200 & 1.25e+001 & 37.2 &          5 & 2.78e+001 &        124 &        200 & 1.12e+001     \\
 200 & 5000 & 1.01e+002 &        113 &  1.2 &        200 & 1.17e+001 & 49.4 &          6 & 2.68e+001 &         97 &        200 & 1.06e+001     \\  \hline
 300 & 1000 & 4.65e+001 &        716 &  0.7 &        300 & 7.13e+000 &  9.2 &         22 & 2.81e+001 &        836 &        300 & 7.05e+000     \\
 300 & 2000 & 6.36e+001 &        219 &  0.6 &        300 & 5.95e+000 & 18.4 &         12 & 2.68e+001 &        232 &        300 & 6.33e+000     \\
 300 & 3000 & 7.88e+001 &        158 &  0.8 &        300 & 5.91e+000 & 29.3 &         12 & 2.58e+001 &        145 &        300 & 6.15e+000     \\
 300 & 4000 & 8.95e+001 &        142 &  1.1 &        300 & 5.61e+000 & 44.9 &         15 & 2.60e+001 &        140 &        300 & 6.27e+000     \\
 300 & 5000 & 1.01e+002 &        125 &  1.3 &        300 & 5.54e+000 & 49.4 &          7 & 2.73e+001 &        114 &        300 & 6.07e+000     \\ \hline
\end{tabular}
\end{center}
 \normalsize
\end{table}

\paragraph{Piecewise constant fitting.} We consider the problem of fitting a noisy signal $\hat x\in \R^n$ using a piecewise constant signal with $r$ pieces (see \cite[Example~9.16]{CaG14}):
\begin{equation}\label{eq:pcf}
  \begin{array}{rl}
    \min\limits_x & \frac{1}{2}\|x - \hat x\|^2\\
    {\rm s.t.} & \|\D x\|_0 \le r-1,
  \end{array}
\end{equation}
where $\D x$ is the $n-1$ dimensional vector whose $i$th entry is $x_{i+1} - x_i$. This is a special case of \eqref{P1} with $h(x) = \frac{1}{2}\|x - \hat x\|^2$ and $P(y)$ being the indicator function of the closed set $\{y:\; \|y\|_0 \le r-1\}$.

It is well known that $\D\D^*\succeq \sigma \I$ for $\sigma = 2(1 + \cos(\pi - \frac\pi{n}))$ \cite[Theorem~2.2]{KuScTs99}, which is close to zero when $n$ is large. Thus, the $\beta$ chosen as in the previous problem is large and can lead to slow convergence. As a heuristic, similarly as in \cite[Remark~2.1]{SunTohYang14}, we initialize $\beta$ as $\frac{1}{5n\sigma}$, and update $\beta$ as $\min\{1.0001\cdot\frac{2}{\sigma},2\beta\}$ when $\beta < \frac{2}{\sigma}$ and either $\|x^t\| > 10^{10}$ or $\|x^t - x^{t-1}\| > \frac{1000}{t}$. It is not hard to see that the sequence generated from the ADMM under this heuristic will still cluster at a stationary point of \eqref{eq:pcf}.

We initialize all variables at the origin and terminate when \eqref{ADMMterm} occurs. As a benchmark, we again look at the standard convex model with the $\ell_0$ norm replaced by the $\ell_1$ norm, solved by SDPT3 (Version 4.0), called via CVX (Version 1.22) using default settings.

In our experiments, we first generate a random piecewise constant signal and then perturb it with a Gaussian noise. Specifically, we use the following MATLAB codes:
\begin{verbatim}
  J = randperm(n-2) + 1; % from 2 to n-1, candidate break-points
  I = sort(J(1:r-1),'ascend'); % r-1 break-points
  x_orig = zeros(n,1);  x_orig(1:I(1)-1) = randn(1);
  for i = 1:r-2
    x_orig(I(i):I(i+1)-1) = randn(1);
  end
  x_orig(I(r-1):end) = randn(1);
  hatx = x_orig + tau*randn(n,1);
\end{verbatim}
We consider $n = 8000$, $10000$, $r = 50$, $100$ and $\tau = 0, 2.5\%$ and $5\%$. The computational results are shown in Table~\ref{table1.5}, where we present the number of iterations for our ADMM, the CPU time for both approaches in seconds,\footnote{We include the preprocessing time by CVX in the CPU time.} the cardinality ({\bf card}) of $\D x$ at the approximate solution $x^*$ for both methods, defined as $\#\{i:\;|(\D x)_i| > 10^{-4}\}$, and the recovery error $\frac{\|x - x_{\rm orig}\|}{\|x_{\rm orig}\|}$, where $x_{\rm orig}$ is the original noiseless piecewise constant signal. We see that the solution from our model always has the correct number of pieces, and is always closer to the original noiseless signal.

\vskip -0.3cm

\begin{table}[h!]
\caption{Computational results for perturbation with bounded number of violated equalities.}\label{table1.5}
\footnotesize
\begin{center}
\begin{tabular}{|ccc||cccc|ccc|}
\hline
\multicolumn{3}{|c||}{}&
\multicolumn{4}{c|}{$\ell_0$-ADMM}&
\multicolumn{3}{c|}{$\ell_1$-CVX}
\\
$\tau$ & $r$& $n$&
{\rm iter}& {\rm CPU} & {\rm card} & {\rm err} & {\rm CPU} & {\rm card} & {\rm err}
\\ \hline
 0.000 &   50 &       8000 &       4944 &  5.8 &         49 & 1.9e-008 &  2.7 &         49 & 2.4e-003       \\
 0.000 &   50 &      10000 &       4728 &  6.8 &         49 & 1.1e-008 &  2.2 &         46 & 5.5e-002       \\
 0.000 &  100 &       8000 &       5961 &  7.1 &         99 & 7.3e-007 &  2.0 &         97 & 1.8e-002       \\
 0.000 &  100 &      10000 &       7385 & 10.9 &         99 & 7.5e-007 &  2.6 &         90 & 5.9e-002       \\ \hline
 0.025 &   50 &       8000 &       4962 &  6.4 &         49 & 6.3e-003 &  2.0 &        118 & 5.9e-002       \\
 0.025 &   50 &      10000 &       6136 &  9.8 &         49 & 5.6e-003 &  2.3 &        106 & 6.8e-002       \\
 0.025 &  100 &       8000 &       5155 &  6.7 &         99 & 1.6e-002 &  1.9 &        164 & 7.3e-002       \\
 0.025 &  100 &      10000 &       5685 &  9.1 &         99 & 1.5e-002 &  2.3 &        206 & 6.4e-002       \\ \hline
 0.050 &   50 &       8000 &       4008 &  5.1 &         49 & 2.4e-002 &  1.7 &        137 & 5.5e-002       \\
 0.050 &   50 &      10000 &       5219 &  8.3 &         49 & 1.2e-002 &  2.3 &        134 & 3.1e-002       \\
 0.050 &  100 &       8000 &       3869 &  5.1 &         99 & 2.0e-002 &  1.7 &        229 & 5.9e-002       \\
 0.050 &  100 &      10000 &       4911 &  7.9 &         99 & 1.3e-002 &  2.6 &        237 & 4.0e-002       \\ \hline
\end{tabular}
\end{center}
 \normalsize
\end{table}
\vskip -0.3cm

Next, we present graphs to visualize
 the quality of the recovered signal via the above two methods: our ADMM method ($\ell_0$-ADMM) and the convex relaxation method ($\ell_1$-CVX).  To do this, we first generate a piecewise constant signal with $20$ pieces, and then perturb it with Gaussian noises with noise level $5\%$. The effect on recovering the original signal with $\ell_0$-ADMM method and the $\ell_1$-CVX method are shown in Figure~\ref{fig1}.
\vskip-0.5cm
\begin{figure}[h!]
\caption{Computational results piecewise constant fitting.}\label{fig1}
\begin{center}
\includegraphics[scale=0.54]{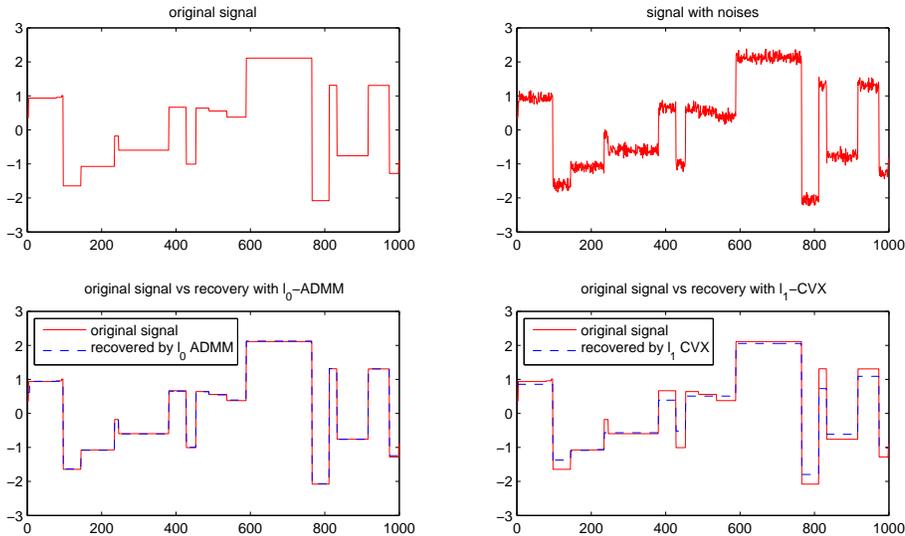}
\end{center}
\vskip-1.3cm
\end{figure}

\subsection{Proximal gradient algorithm}

In this section, we consider the following concave minimization problem:
\begin{equation}\label{sim1}
  \begin{array}{rl}
    \min\limits_x & -\frac{1}{2}\|\A x - b\|^2\\
    {\rm s.t.} & x \in {\cal C},
  \end{array}
\end{equation}
where ${\cal C}$ is a compact convex set whose projection is easy to compute, $\A \in \R^{m\times n}$ and $b\in \R^m$. We apply the proximal gradient algorithm and illustrate how the more flexible stepsize rule introduced via Theorem~\ref{prop:prox} affects the solution quality and the computational time. Specifically, we apply the proximal gradient algorithms with various step-size parameters $\beta > 0$. Since the objective in \eqref{sim1} is concave and $\cal C$ is compact, we see from Theorem~\ref{prop:prox} that for any $\beta > 0$, the sequence generated from the proximal gradient algorithm is bounded with cluster points being stationary points of \eqref{sim1}.

We initialize the algorithm at the origin and terminate when the change between successive iterates is small, i.e., when
\[
\frac{\|x^t - x^{t-1}\|}{\|x^t\| + 1} < 10^{-8}.
\]
We consider random instances. Specifically, for $m = 1000$ and each $n = 3000$, $4000$, $5000$ and $6000$, we generate a random matrix $\A\in \R^{m\times n}$ with i.i.d. standard Gaussian entries. We also generate $b\in \R^n$ with i.i.d. standard Gaussian entries.

The computational results are reported in Table~\ref{table3}, where we take $\cal C$ to be the unit $\ell_1$ norm ball for the first 4 rows, and the unit $\ell_\infty$ norm ball for the rest. We report the quantity $\lambda_{\max}(\A^*\A)$ for each of the random instances: the reciprocal of this quantity is typically used as an upper bound of the allowable step-size $\beta$ in the usual proximal gradient algorithm. We consider $\beta = 1/\lambda_{\max}(\A^*\A)$, $2/\lambda_{\max}(\A^*\A)$, $10/\lambda_{\max}(\A^*\A)$ and $50/\lambda_{\max}(\A^*\A)$, and report the terminating function value and number of iterations. We observe that the number of iterations is typically less when $\beta$ is larger. On the other hand, we can also observe that the terminating function values are not affected by the choice of step-size $\beta$ for the easier problems corresponding to the $\ell_1$ norm ball, but the solution quality concerning the $\ell_\infty$ norm ball does depend on the step-size $\beta$.

\begin{table}[h!]
\caption{Performance of the proximal gradient algorithm with varying $\beta$.}\label{table3}
\footnotesize
\begin{center}
\begin{tabular}{|cc||cc|cc|cc|cc|}
\hline
\multicolumn{2}{|c||}{$$}&
\multicolumn{2}{c|}{$\beta = 1/\lambda_{\max}(\A^*\A)$}&
\multicolumn{2}{c|}{$\beta = 2/\lambda_{\max}(\A^*\A)$}&
\multicolumn{2}{c|}{$\beta = 10/\lambda_{\max}(\A^*\A)$}&
\multicolumn{2}{c|}{$\beta = 50/\lambda_{\max}(\A^*\A)$}
\\
$n$& $\lambda_{\max}(\A^*\A)$& {\rm iter}& {\rm fval}& {\rm iter}& {\rm fval}& {\rm iter}& {\rm fval}& {\rm iter}& {\rm fval}
\\ \hline
  3000 & 7.41e+003 &         71 & -1.108e+003 &         44 & -1.108e+003 &          8 & -1.189e+003 &          4 & -1.189e+003     \\
  4000 & 8.97e+003 &         38 & -1.205e+003 &         21 & -1.205e+003 &          7 & -1.205e+003 &          4 & -1.205e+003     \\
  5000 & 1.04e+004 &         63 & -1.102e+003 &         34 & -1.102e+003 &         10 & -1.102e+003 &          5 & -1.102e+003     \\
  6000 & 1.19e+004 &         58 & -1.135e+003 &         30 & -1.135e+003 &          9 & -1.135e+003 &          4 & -1.135e+003     \\ \hline
  3000 & 7.44e+003 &        206 & -7.259e+006 &        207 & -7.180e+006 &         70 & -7.005e+006 &         44 & -6.829e+006     \\
  4000 & 8.96e+003 &        209 & -1.154e+007 &        175 & -1.148e+007 &        106 & -1.136e+007 &         55 & -1.122e+007     \\
  5000 & 1.05e+004 &        983 & -1.722e+007 &        244 & -1.709e+007 &        179 & -1.713e+007 &         56 & -1.694e+007     \\
  6000 & 1.18e+004 &       1068 & -2.318e+007 &        377 & -2.293e+007 &        166 & -2.292e+007 &         43 & -2.271e+007     \\  \hline
\end{tabular}
\end{center}
 \normalsize
\end{table}

\section{Conclusion and future directions}\label{sec:con}

In this paper, we study the proximal ADMM and the proximal gradient algorithm for solving problem \eqref{P1} with a general surjective $\M$ and $\M = \I$, respectively. We prove that any cluster point of the sequence generated from the algorithms gives a stationary point by assuming merely a specific choice of parameters and the existence of a cluster point. We also show that if the functions $h$ and $P$ are in addition semi-algebraic and the sequence generated by the ADMM (i.e., proximal ADMM with $\phi=0$) clusters, then the sequence is actually convergent. Furthermore, we give simple sufficient conditions for the boundedness of the sequence generated from the proximal ADMM.

One interesting future research direction would be to adapt other splitting methods for convex problems to solve \eqref{P1}, especially in the case when $\M$ is injective, and study their convergence properties.

\paragraph{Acknowledgement.} The second author would like to thank Ernie Esser and Gabriel Goh for enlightening discussions. The authors would also like to thank the anonymous referees for suggestions that help improve the manuscript.

\end{document}